\theoremstyle{definition}
\newtheorem{thm}{Theorem}[section]
\newtheorem{lem}[thm]{Lemma}
\newtheorem{defn}[thm]{Definition}
\newtheorem{rem}[thm]{Remark}
\newcommand\QQ{{\mathbf Q}}
\newcommand{\RR}{\mathbf{R}}      
\newcommand{\ZZ}{\mathbf{Z}}      
\newcommand\CC{{\mathbf C}}
\newcommand{\Gm}{\mathbf{G}_m}
\newcommand{\kk}{\mathsf{k}}
\newcommand{\underg}{\underline{g}}
\newcommand{\Z}{\mathbf{Z}}
\newcommand{\Q}{\mathbf{Q}}
\newcommand{\R}{\mathbf{R}}
\newcommand{\C}{\mathbf{C}}
\newcommand{\F}{\mathbf{F}}
\newcommand{\spec}{\mathrm{Spec}\, }
\newcommand{\acts}{\curvearrowright}
\newcommand{\Ggr}{\mathbf{G}_{\mathrm{gr}}}
\begin{document}

\author{Eric Y. Chen}
\title{Relative Langlands duality of Toric Periods}
\begin{abstract}
The relative Langlands program introduced by Ben-Zvi--Sakellaridis--Venkatesh \cite{BZSV} posits a duality structure exchanging automorphic periods and $L$-functions, which can be encoded by pairs of dual Hamiltonian actions. In \cite{CV}, an extension of the definitions to certain singular spaces was made with the objective of restoring duality in some well-known automorphic integrals. In this companion article we apply the definitions of \textit{loc. cit} to establish duality in the context of affine toric varieties, and study finer structures regarding regularization that are instructive for the general case.
\end{abstract}

\maketitle
\tableofcontents

\section{Introduction}

\subsection{Relative Langlands duality} 
Let $G$ and $\check{G}$ be a pair of Langlands dual reductive groups defined over $\ZZ$ (or a localization thereof). In \cite{BZSV}, a notion of ``relative Langlands duality" is proposed, which posits that the duality between $G$ and $\check{G}$ should extend to a duality between certain Hamiltonian $G$ and $\check{G}$-actions. These Hamiltonian actions index objects and numerical measurements on the two sides of Langlands duality, and their duality expresses the coincidence of these data. 

For simplicity of exposition, we will assume for the rest of the introduction that our Hamiltonian actions are \textit{polarized}, i.e., we index them by $G$-spaces $X$ and $\check{G}$-spaces $\check{X}$; one may recover the underlying Hamiltonian action by taking cotangent bundles.\footnote{Be aware that, in this simplification, our notation is a bit misleading: the duality structure ought to exist between $M = T^*X$ and $\check{M} = T^*\check{X}$, but not between $X$ and $\check{X}$ themselves.} From an arithmetic perspective, one of the most interesting coincidences of numerical measurements takes the following shape. Given a pair of dual Hamiltonian actions $(G,M = T^*X)$ and $(\check{G}, \check{M} = T^*\check{X})$, one expects a pair of equations: for a Hecke eigenform $f_G$ on $G$ and a Hecke eigenform $f_{\check{G}}$ on $\check{G}$,
\begin{equation}\label{eq: motivation1}
    \langle X\text{-Poincar\'e series}, f_G\rangle \sim L(\check{X}, f_G)
\end{equation}
\begin{equation}\label{eq: motivation2}
    \langle \check{X}\text{-Poincar\'e series}, f_{\check{G}}\rangle \sim L(X, f_{\check{G}})
\end{equation}

Summarizing crudely, the left hand sides of these equations are $L^2$-pairings between a certain Poincar\'e series and a chosen automorphic form, recovering various familiar period integrals in the literature, while the right hand sides of these equations are \textit{nonlinear} generalizations of $L$-functions, which recover Langlands $L$-functions in the case when $X$ (or $\check{X}$) is a linear representation. For a more extensive introduction to and motivation of these objects, we refer the reader to the Introduction of \cite{CV}.

To make Equations \eqref{eq: motivation1} and \eqref{eq: motivation2} precise, various regularization procedures were adopted in \textit{loc. cit}; to name a few, one restricts to cusp forms on the left hand side, discards smaller orbits, and formally ``cancels" various $\zeta(1)$'s that occur from central tori on both sides. These tricks should be viewed as a shortcut with the goal of ignoring trivial (but interesting) divergences and to establish a mathematical statement as quickly as possible, but a complete analysis of dual Hamiltonian actions should provide a systematic way to regularize these divergences.

One of the principal goals of this note is to study these finer details that go beyond \textit{weak numerical duality} (see \S \ref{subsect: weakDuality} for a definition) in the simple setting of toric varieties, in which the most salient features are already visible. 

In the rest of the Introduction, we discuss the three main aspects of relative duality which go beyond the \textit{hyperspherical} setting (see \S 3.5 of \cite{BZSV} for a definition) that we analyze in the toric case: singularities, regularization, and disconnected stabilizers. 

\subsection{Singular spaces}
In order to make manifest the underlying duality in the Garrett triple product integral \cite{Garrett} and Ginzburg's adjoint integral \cite{Ginzburg}, we argued in \cite{CV} that it is necessary to consider the case when $X$ and $\check{X}$ are singular. In this note we study another class of singular spaces: affine toric varieties with normal singularities. This is a class of actions that is closed under weak numerical duality according to one of our main theorems (see Theorem \ref{thm: main1}), and the lesson extracted is the same as that of \cite{CV}:
\begin{quote}
    integrality condition on $X(F_v)$ $\leftrightarrow$ weights of functions appearing on $\check{X}$, and
\end{quote}
\begin{quote}
    integrality condition on $\check{X}(F_v)$ $\leftrightarrow$ weights of functions appearing on $X$.
\end{quote}
As is often the case when studying toric varieties, the weak numerical duality between these toric varieties can be understood combinatorially. We define the notion of \textit{toric dual} toric varieties (see Definition \ref{defn: toricDual}), which seems to us a natural combinatorial definition although as far as we are aware, has not appeared in the toric varieties literature.

\subsection{Regularization}
As is well-known in the theory of integral representations of $L$-functions, one often discards all but one \textit{main orbit} in $X$ when calculating the $L^2$-pairing on the left hand side of Equation \eqref{eq: motivation1}; indeed, contributions from smaller orbits are often proportional to constant term integrals and thus may be ignored in the analysis of periods of cusp forms. 

On the other hand, when $X$ is placed on the spectral side, i.e. when we consider the right hand side of Equation \eqref{eq: motivation2}, it is in fact the smallest orbit that plays the principal role of determining whether the expression $L(X, f_{\check{G}})$ vanishes or not. Roughly speaking, weak numerical duality is a matching between
\begin{quote}
    main orbit automorphic $X$-period $\leftrightarrow$ smallest orbit spectral $\check{X}$-period, and 
\end{quote}
\begin{quote}
    main orbit automorphic $\check{X}$-period $\leftrightarrow$ smallest orbit spectral $X$-period.
\end{quote}
One expects that this matching extends to a inclusion-reversing correspondence between all orbits of $X$ and $\check{X}$, although the resulting automorphic and spectral contributions from these secondary orbits require regularization to define properly. 

While the notion of cusp forms and constant terms are absent in the toric case, we observe in \S \ref{sect: regularize} that the correspondence between orbits alluded to above is present and explicitly computable; in particular, the matching of ``smaller" orbit automorphic contributions from $X$ can be matched with ``bigger" orbit spectral contributions from $\check{X}$, and vice versa, following the regularization scheme of Definition \ref{def: regularizeAut} and Definition \ref{def: regularizeSpec}.

\subsection{Disconnected stabilizers}

In the definition of hypersphericity one stipulates that the stabilizer of a generic point is connected. Again in the toric case, we consider when the stabilizer is a finite subgroup scheme $\mu \subset T$; in other words, the action of $T$ factors through the quotient torus $T/\mu$. It is natural to construct toric Deligne--Mumford stacks from this setup, and in Section \ref{appendix: DM} we formalize the heuristic that
\begin{quote}
    disconnected stabilizers $\leftrightarrow$ Deligne--Mumford stabilizer on the dual
\end{quote}
by defining an automorphic and spectral period for these toric Deligne--Mumford stacks. For instance, the following pair of actions 
$$(T = \Gm, \mathbf{A}^1) \text{ and } (\check{T} = \Gm, [\mathbf{A}^1/\ZZ/n\ZZ])$$
where $T$ acts on $\mathbf{A}^1$ \textit{with weight $n \in \mathbf{N}$} will be a weakly numerically dual pair in our sense. This was already foreseen from Coulomb branch calculations of the weight $n$ action of $\Gm$ on $\mathbf{A}^1$ (see Theorem 4.1 of \cite{BFN}).

\subsection{Notation} \label{notation} 
\subsubsection{Tori}
 We denote by $T$ a split torus defined over $\ZZ$. Its character lattice (resp. cocharacter lattice) will be denoted by $X^*(T)$ (resp. $X_*(T)$). The Langlands dual torus of $T$ will be denoted $\check{T}$, which is a split torus defined over $\ZZ$ with character lattice $X_*(T)$ and cocharacter lattice $X^*(T)$.

\subsubsection{Coefficient fields} \label{Fandk}
$\F$ will be used for an ``automorphic'' coefficient field; it will always
be a finite field of size a prime power $q$. The letter
$\kk$ will denote a choice of ``Galois-side'' coefficient field: an algebraically closed field of characteristic 0. It will be convenient to fix, once and for all, an isomorphism
$\kk \simeq \C$ so we may freely move between $\kk$-valued and $\CC$-valued notions without explicit comment.

 \subsubsection{Group actions} \label{Xdef}
Our convention for group actions (in accordance with that of \cite{CV}) is as follows:
 \begin{quote}
Group actions on spaces on the right, and group actions on functions, forms etc.  are on the left,
derived from geometric actions by pullback.\footnote{Since we are considering only tori, one can be cavalier about this convention upon a first pass; however, we defer to \S A.1 of \cite{CV} for the discussion of the consequences of such conventions when comparing with ``usual" $L$-functions.}
\end{quote}

  Let $X, \check{X}$ be affine $\ZZ$-varieties, 
 admitting actions of 
  $T \times \Ggr$
and $\check{T} \times \Ggr$ respectively.   Here, $\Ggr$ is simply another name for $\Gm$ and is in fact identified with a one-parameter subgroup of $T$ (or $\check{T}$) via a choice of cocharacter, but it plays a distinguished role and to avoid confusion we label it differently. The most important
function of the $\Ggr$ action is that it will govern, in general, where the $L$-function is to be evaluated,
when $X$ or $\check{X}$ is on the spectral side. 
An  {\em eigenmeasure} is a
differential form of top degree on the smooth locus $X^{\circ}$
which is an eigenvector under the translation action of both $T$ and $\Ggr$: 
\begin{equation} \label{etadef}  (g, \lambda)^* \omega = \eta(g, \lambda) \omega = \eta(g)\lambda^\varepsilon \omega,\end{equation} 
for a character $\eta: T \times \Ggr \rightarrow \Gm$ and an integer $\varepsilon \in \ZZ$. 

Our primary interest is in the case when $X, \check{X}$ are {\em conical}, in the following sense.
\begin{defn}\label{defn: conical}
Let $k$ be a field, and let $X$ be an affine $k$-variety with $\Gm$-action. We say that $X$ is \textit{conical} if the coordinate ring $k[X]$ has only nonnegative $\Gm$-weights, and the 0th graded piece is isomorphic to $k$.
(In particular, such an $X$ has a unique $\Gm$-fixed point, usually to be denoted by $0$). 
\end{defn}

For our purposes, $k$ will either be $\F$ (the automorphic coefficient field) or $\kk$ (the Galois coefficient field). A $T \times \Ggr$-variety $X$ is \textit{conical} if it is conical in the above sense with $\Gm = \Ggr$.

 \subsubsection{Curves}
 
Let $\Sigma$ be a curve of genus $g$ over $\F$ and with function field $F$. 
We introduce the letter $\Delta$ for the discriminant of $\Sigma$, that is to say:
$$ \Delta :=  q^{2g-2}.$$
We will write $\zeta(s)$ for the $\zeta$-function of $\Sigma$, i.e.
$\zeta(s) = \prod_{v} (1-q_v^{-s})^{-1}$ where the product ranges over places $v$
with residue field of size $q_v$.

We denote by $\mathbb{A}$ the adele ring of $F$
and by $\mathcal{O} \subset \mathbb{A}$ the maximal compact subring. 
Write \begin{equation} \label{bracketnotation} [T] := T_F \backslash T_{\mathbb{A}} / T_{\mathcal{O}}\end{equation} 
the adelic quotient, equivalently, the
set of isomorphism classes of  ($\F$-rational) $T$-bundles over $\Sigma$.

 An ``unramified automorphic form'' on $T$
will be, by definition, a 
\textit{unitary} character 
$$\chi: [T] \rightarrow \kk^\times$$

\subsubsection{Local notation}
We write $v$ for a place of $F$ and $F_v, \mathcal{O}_v, \varpi_v$ for the completion of $F$, the local ring of integers, and the uniformizer, at $v$. We write $q_v$ for the size of the local residue field at $v$. The normalized local valuation will be denoted by $x \mapsto |x|_v$ and
its product over all places gives the adelic valuation $\mathbb{A}^{\times} \rightarrow \R_{>0}$, often
denoted simply $x \mapsto |x|$. For $\lambda$ a cocharacter of $T$, we often write $\varpi^\lambda_v := \lambda(\varpi_v) \in T(F_v)$. These elements form a complete set of $T(\mathcal{O}_v)$-orbit representatives on $T(F_v)$.

\subsubsection{Galois parameters}

We will write
$$\Gamma \mbox{ or } \Gamma_F = \mbox{Weil group of $\Sigma$}$$
for the  unramified global Weil group of $\Sigma$, i.e. the preimage of integer powers of Frobenius inside the {\'e}tale fundamental
group (equivalently: everywhere unramified Galois group) of $\Sigma$. 

\subsubsection{Spin structures} 

Let $\psi: \mathbb{A}/F \rightarrow \CC^{\times}$
be an additive ``Whittaker" character 
 whose conductor at each place $v$ is even.
 That is to say, there exists, for each place $v$,
an {\em even} integer $2m_v$, with the property that 
$\psi$ is trivial on $\varpi_v^{-2m_v} \mathcal{O}_v$
but not on $\varpi_v^{-2m_v-1} \mathcal{O}_v$. 
Such a character exists by a theorem of Hecke (see \cite{Hecke} Satz 176).\footnote{ Let $K^{1/2} = K^{1/2}_\Sigma$ be a choice of $\F$-rational spin structure;
 it exists, by a theorem of Hecke, and we fix a rational section $\nu$ of $K^{1/2}$ which determines also a rational 1-form $\omega = \nu^{\otimes 2}$.
 Let $2m_v$ be the vanishing order of $\omega$ at a place $v$.

 We take $\psi$ to be the character given by 
 determined by $\omega$, i.e. locally sending $f$ to $\mathrm{Res}(f \omega)$.} 

We write
 \begin{equation} \label{partialdef} \partial^{1/2} = (\varpi_v^{m_v}) \in \mathbb{A}^\times, \mbox{ so that } |\partial| =q^{-(2g-2)} = \Delta^{-1}.\end{equation}
While the Whittaker character will not be relevant since we only consider automorphic forms on tori, the spin structure that we are inherently fixing will be.

\subsubsection{Measures} \label{groupmeasures}
For $G$ a reductive group over $\F$, we shall 
  normalize the Haar measure on $G(\mathbb{A})$ in such a way that 
it assigns mass $1$ to the standard maximal compact subgroup
$\prod_{v} G(\mathcal{O}_v)$.

\subsubsection{Normalization of class field theory} 
Our normalization of local class field theory
associates the modulus character $x \mapsto |x|$
of a local field $F$ with the cyclotomic character of its Galois group, that is to say, sending a geometric Frobenius element
to $q^{-1}$ where $q$ is the size of the residue field; or, said differently, the reciprocity map
of class field theory
sends a uniformizing element to geometric Frobenius. 
In what follows,  Frobenius means geometric Frobenius.

\subsection{Acknowledgements}
We would like to thank Akshay Venkatesh for their continual encouragement and their interest in this project, Philippe Michel for pointing out the relationship to the work of Batyrev--Tshinkel on Manin's conjecture for toric varieties, Raphaël Beuzart-Plessis for helpful discussions regarding the integrals of Section \ref{appendix: DM}, and Dimitri Wyss and Sergej Monavari for conveniently teaching a course on toric varieties during the fall semester of 2023. This work was supported by the Swiss National Science Foundation No. 196960.

\section{Combinatorial conventions of toric varieties}

Let $T, \check{T}$ be a pair of (split) Langlands dual tori defined over $\ZZ$. They are defined by canonical isomorphisms
$$X^*(T) \cong X_*(\check{T}) \text{ and } X_*(T) \cong X^*(\check{T})$$
between the character lattice of $T$ with the cocharacter lattice of $\check{T}$, and between the cocharacter lattice of $T$ and the character lattice of $\check{T}$.

\subsection{Cones and fans}
Given a cone $\sigma \subset X_*(T)_\RR$ which is  \textit{strongly convex} (meaning convex and not containing any linear subspaces) and \textit{rational} (meaning that its faces are defined by rationally defined hyperplanes), one associates an affine $T$-toric variety defined over $\Z$:
\begin{equation}\label{eq: affineToricDefn}
    U_\sigma := \spec \Z[\check{\sigma} \cap X^*(T)]
\end{equation}
where $\check{\sigma}$ is the dual cone of $\sigma$, defined by
$$\check{\sigma} := \big\{\chi: \langle \sigma, \chi\rangle \geq 0\big\}$$

Although we will only consider normal affine toric varieties, or equivalently, strongly convex rational cones, it is useful to work with the notion of a fan when discussing orbits inside our toric varieties, so we briefly review the nomenclature of this theory for the reader's convenience. Given a fan $\Sigma$\footnote{By a \textit{fan}, we mean a collection of rational strongly convex cones. The collection satisfies the following properties: 1) the intersection of two cones in the collection is a face of both cones, and 2) all the faces of all the cones in the collection also belong to the collection.} in $X_*(T)_\RR$, one associates a $T$-toric variety $X_\Sigma$ defined over $\Z$ as follows:
\begin{itemize}
    \item For each cone $\sigma$ in $\Sigma$, one has the affine scheme $U_\sigma$ defined by \eqref{eq: affineToricDefn}. 
    \item Suppose $\sigma_1, \sigma_2$ are two cones who share a face $\tau$. Then there is some $\chi \in X^*(T)$ whose hyperplane $H_\chi := \big\{x \in X_*(T): \langle \chi,x\rangle = 0\big\}$ intersects with $\sigma_1,\sigma_2$ exactly at $\tau$:
    $$\tau = H_\chi \cap \sigma_1 = H_\chi \cap \sigma_2,$$
    and we may assume without loss of generality that $\chi$ is positive on the interior of $\sigma_1$. For such a $\chi$, there is a canonical isomorphism 
    $$\spec \ZZ[\check{\sigma}_1 \cap X^*(T)][\chi^{-1}] \cong U_\tau \cong \spec \ZZ[\check{\sigma_2} \cap X^*(T)][\chi]$$
    that allows us to glue $U_{\sigma_1}$ and $U_{\sigma_2}$.
    \item We obtain 
    $$X_\Sigma := \big(\sqcup_{\sigma \in \Sigma} \, U_\sigma\big)/\sim$$
    where the equivalence relation $\sim$ is described in the gluing step above.
\end{itemize}
If $\sigma \subset X_*(T)_\RR$ is a rational strongly convex cone, then we may consider the fan $\Sigma$ consisting of all faces of $\sigma$, and obtain the same toric variety $U_\sigma \cong X_\Sigma$ using either definition. 

The combinatorics of inclusions of cones dictate the geometry of its associated toric variety. In particular, we have the following useful correspondence between $T$-orbits on $X_\Sigma$ and cones in $\Sigma$.

\begin{thm}[Theorem 3.2.6 \cite{CLS}]\label{thm: orbitConeCorresp}
Let $X$ be the toric variety associated to the fan $\Sigma$ in $X_*(T)_\RR$. Then there is a bijective correspondence    
$$\bigg\{\text{Cones in }\Sigma\bigg\} \longleftrightarrow \bigg\{T\text{-orbits in } X\bigg\}$$
$$\sigma \longleftrightarrow O_\sigma = T/S_\sigma$$
where $S_\sigma \subset T$ is the subtorus with cocharacter lattice generated by $\sigma \cap X_*(T)$, with the following properties (we write $\tau \leq \sigma$ if $\tau$ is a cone in $\Sigma$ containing $\sigma$ as a face):
\begin{itemize}
    \item The affine open subset $U_\sigma$ is the union of orbits
    $$U_\sigma = \bigcup_{\tau \leq \sigma} \, O_\tau.$$
    \item $\tau \leq \sigma$ if and only if $O_\sigma$ is in the closure of $O_\tau$, and
    $$\overline{O_\tau} = \bigcup_{\tau \leq \sigma} \, O_\sigma$$
\end{itemize}
\end{thm}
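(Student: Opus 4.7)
The plan is to reduce to the affine case and then exploit the duality between faces of $\sigma$ and faces of $\check{\sigma}$, together with the semigroup description of points of $U_\sigma$.

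First I would observe that the orbit structure on $X_\Sigma$ is determined by the orbit structures on the open covering $\{U_\sigma\}$: the gluing along common faces is equivariant, so each $T$-orbit on $X_\Sigma$ is obtained by gluing compatible $T$-orbits on the $U_\sigma$'s. So the problem reduces to establishing the claim for an affine toric variety $U_\sigma = \spec \ZZ[\check{\sigma} \cap X^*(T)]$, with cones of $\Sigma$ replaced by faces of $\sigma$.

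Next, for a field $k$, I would identify $U_\sigma(k)$ with the set of semigroup homomorphisms $\gamma: \check{\sigma}\cap X^*(T) \to (k,\cdot)$ (with target the multiplicative monoid including $0$), an identification which is standard and $T$-equivariant where $T(k) = \Hom(X^*(T), k^\times)$ acts by pointwise multiplication. For each face $\tau \leq \sigma$, I would define the \emph{distinguished point} $\gamma_\tau \in U_\sigma(k)$ by
\[
\gamma_\tau(\chi) = \begin{cases} 1 & \chi \in \tau^\perp \cap \check{\sigma}, \\ 0 & \text{otherwise},\end{cases}
\]
where the key combinatorial input is that $\tau^\perp \cap \check{\sigma}$ is itself a face of $\check{\sigma}$ (the standard face–duality for strongly convex rational cones), so this recipe is multiplicatively consistent.

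Then I would compute the stabilizer of $\gamma_\tau$ and check every orbit contains exactly one $\gamma_\tau$. The stabilizer of $\gamma_\tau$ in $T$ consists of those $t \in T$ with $\chi(t) = 1$ for every $\chi \in \tau^\perp \cap X^*(T)$, which is exactly the subtorus $S_\tau$ whose cocharacter lattice is $\mathrm{span}(\tau) \cap X_*(T) = \langle \tau \cap X_*(T)\rangle$. To see that every point $\gamma \in U_\sigma(k)$ lies in the $T$-orbit of some $\gamma_\tau$, let $\tau^\vee := \{\chi \in \check{\sigma}\cap X^*(T): \gamma(\chi) \neq 0\}$; this is a face of the semigroup $\check{\sigma}\cap X^*(T)$, hence of the form $\tau^\perp \cap \check{\sigma} \cap X^*(T)$ for a unique face $\tau\leq \sigma$, and one can adjust $\gamma$ by an element of $T(k)$ to obtain $\gamma_\tau$. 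Disjointness of orbits for different $\tau$'s follows because the support set $\tau^\vee$ is a $T$-invariant of the point.

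Finally, for the closure relations, the containment $U_\tau \subset U_\sigma$ for $\tau \leq \sigma$ follows directly from the definitions (localization at characters in $\tau^\perp \cap \check{\sigma}$), which together with the orbit description yields the first bullet. For the second, I would show $\overline{O_\tau} \cap U_\sigma \supset O_{\sigma'}$ for all $\tau \leq \sigma' \leq \sigma$ by exhibiting explicit one-parameter families $\lambda \mapsto t\cdot \gamma_\tau$ with $t = \lambda(\varpi)$ for a generic cocharacter $\lambda$ in the relative interior of $\sigma'$: as $\lambda \to 0$ the character $\chi(t)$ tends to $0$ for $\chi \notin (\sigma')^\perp$ and to $1$ for $\chi \in (\sigma')^\perp$, so the limit is $\gamma_{\sigma'}$. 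Conversely, a point in $\overline{O_\tau}$ must have support $\tau^\vee$ contained in $\tau^\perp\cap X^*(T)$, forcing its associated face to contain $\tau$.

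The main obstacle is the combinatorial duality between faces of $\sigma$ and faces of $\check{\sigma}$ (including the matching $\tau \leftrightarrow \tau^\perp \cap \check{\sigma}$), which undergirds both the construction of the distinguished points and the stabilizer computation; everything else is bookkeeping once this duality is in hand.
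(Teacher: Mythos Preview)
The paper does not give its own proof of this statement: it is quoted verbatim as Theorem~3.2.6 of \cite{CLS} and used as background input, with no argument supplied. So there is nothing in the paper to compare your proposal against.

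That said, your outline is essentially the standard proof one finds in \cite{CLS}: reduce to a single affine chart $U_\sigma$, identify its $k$-points with monoid homomorphisms $\check{\sigma}\cap X^*(T)\to (k,\cdot)$, use the face duality $\tau\leftrightarrow \tau^\perp\cap\check{\sigma}$ to write down distinguished points $\gamma_\tau$, compute stabilizers, and read off closure relations via one-parameter limits. One small notational wrinkle: in your limit argument you write ``$t=\lambda(\varpi)$'' and then ``as $\lambda\to 0$'', which conflates the cocharacter with the scalar parameter (and imports local-field notation into a purely algebraic statement). What you mean is to pick a cocharacter $\mu$ in the relative interior of $\sigma'$ and send the scalar $s\to 0$ in $\mu(s)\cdot\gamma_\tau$; then $\chi(\mu(s))=s^{\langle\chi,\mu\rangle}$ has the claimed limiting behavior. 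With that cosmetic fix the argument is fine and matches the reference the paper is invoking.
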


Besides the $T$-torsor in $X_\Sigma$ corresponding to the smallest cone $\{0\} \in \Sigma$ which we call the \textit{main orbit}, all other orbits are themselves toric varieties for a quotient torus of $T$.

\begin{thm}[Proposition 3.2.7 \cite{CLS}]\label{thm: toricOrbits} With notation as in the previous Theorem, let $\tau \in \Sigma$ be a cone, and let $K_\tau = T/S_\tau$ be the quotient of $T$ acting on the orbit $O_\tau \subset X_\Sigma$ (so that $O_\tau$ is a $K_\tau$-torsor). The orbit closure $\overline{O}_\tau$ is isomorphic to the $K_\tau$-toric variety associated to the fan
    $$\Sigma_\tau := \big\{\sigma_{K_\tau} \in X_*(K_\tau)_\RR \, : \, \tau \leq \sigma \subset \Sigma\big\}$$
    where $(\, \cdot \, )_{K_\tau}: X_*(T)_\R \to X_*(K_\tau)_\R$ is the natural quotient map.
\end{thm}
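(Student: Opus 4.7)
The plan is to prove the isomorphism affine-locally, using the affine open cover $X_\Sigma = \bigcup_{\sigma \in \Sigma} U_\sigma$. By Theorem \ref{thm: orbitConeCorresp}, the orbit closure $\overline{O}_\tau$ decomposes as $\bigcup_{\sigma \geq \tau} O_\sigma$, so for each $\sigma \geq \tau$ the intersection $\overline{O}_\tau \cap U_\sigma$ is precisely the union of $O_{\sigma'}$ over faces $\sigma' \leq \sigma$ with $\sigma' \geq \tau$. On the combinatorial side, the fan $\overline{\tau}$ consists of the cones $\overline{\sigma}$ for $\sigma \geq \tau$, and its affine cover is by the $U_{\overline{\sigma}}$. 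The strategy is therefore to identify $\overline{O}_\tau \cap U_\sigma$ with $U_{\overline{\sigma}}$ as $K_\tau$-varieties, and then check that these identifications are compatible with the gluing data.

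First I would handle the affine case. The orbit $O_\tau \subset U_\sigma$ is the reduced closed subscheme cut out by the $T$-invariant ideal $I_\tau \subset \ZZ[\check{\sigma} \cap X^*(T)]$ generated by the characters $\{\chi \in \check{\sigma} \cap X^*(T) : \langle \chi, \tau\rangle > 0\}$; its closure inside $U_\sigma$ has coordinate ring $\ZZ[(\check{\sigma} \cap \tau^\perp) \cap X^*(T)]$, since a character $\chi \in \check{\sigma}$ lies outside $I_\tau$ precisely when $\chi|_\tau \equiv 0$. By definition $X^*(K_\tau) = \tau^\perp \cap X^*(T)$, so this ring is supported on the sublattice of characters of $K_\tau$. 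It remains to identify $\check{\sigma} \cap \tau^\perp$ with the dual cone of $\overline{\sigma}$ inside $X_*(K_\tau)_\RR$: an element $\chi \in \tau^\perp$ satisfies $\langle \chi, \sigma\rangle \geq 0$ if and only if the induced functional $\overline{\chi}$ on $X_*(K_\tau)_\RR$ satisfies $\langle \overline{\chi}, \overline{\sigma}\rangle \geq 0$, giving the canonical $K_\tau$-equivariant isomorphism $\overline{O}_\tau \cap U_\sigma \cong U_{\overline{\sigma}}$.

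For the gluing, suppose $\sigma_1, \sigma_2 \geq \tau$ share a face $\sigma_{12}$, which necessarily contains $\tau$. The character $\chi \in X^*(T)$ realizing this face as a hyperplane intersection can be chosen to lie in $\tau^\perp$, since $\tau \subset \sigma_{12}$ already implies $\langle \chi, \tau \rangle = 0$; its image $\overline{\chi} \in X^*(K_\tau)$ then realizes $\overline{\sigma}_{12}$ as the corresponding face in $X_*(K_\tau)_\RR$. Hence the gluing isomorphisms for $X_\Sigma$ restrict to the gluing isomorphisms for $X_{\overline{\tau}}$, and we obtain a global isomorphism $\overline{O}_\tau \cong X_{\overline{\tau}}$. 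The main obstacle to be careful about is verifying that $\overline{\sigma}$ remains strongly convex and rational (so that $X_{\overline{\tau}}$ is genuinely a toric variety in our sense); strong convexity follows because the largest linear subspace of $\overline{\sigma}$ pulls back to a subspace of $\sigma$ contained in the span of $\tau$, hence is zero in the quotient, and rationality is preserved by the rational quotient map. Once these combinatorial facts are in place, the rest of the argument is a bookkeeping exercise in the orbit-cone correspondence already summarized in Theorem \ref{thm: orbitConeCorresp}.
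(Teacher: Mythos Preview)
The paper does not supply its own proof of this statement: it is quoted verbatim as Proposition~3.2.7 of \cite{CLS} and used as a black box. Your argument is correct and is essentially the standard proof one finds in that reference --- identify the coordinate ring of $\overline{O}_\tau \cap U_\sigma$ as $\ZZ[\check{\sigma} \cap \tau^\perp \cap X^*(T)]$, recognize $\check{\sigma} \cap \tau^\perp$ as the dual cone of $\overline{\sigma}$ inside $X^*(K_\tau)_\RR$, and check that the gluing data descend. One small point worth tightening: in your strong convexity check, the assertion that a linear subspace of $\overline{\sigma}$ ``pulls back to a subspace of $\sigma$'' is not literally true; what you need is that if $\overline{v}, -\overline{v} \in \overline{\sigma}$ then representatives $w_1, w_2 \in \sigma$ with $\overline{w}_1 = \overline{v}$, $\overline{w}_2 = -\overline{v}$ satisfy $w_1 + w_2 \in \mathrm{span}(\tau)$, and a supporting functional for the face $\tau \leq \sigma$ then forces $w_1, w_2 \in \tau$ individually. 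This is a routine fix and does not affect the overall strategy.
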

For the rest of our discussion, a toric variety will always mean a \textit{normal and affine toric variety}, unless otherwise mentioned.

\subsection{Toric duality}
In this section, we define a natural duality structure 
$$\bigg\{\text{ affine } T\text{-toric varieties}\bigg\} \longleftrightarrow \bigg\{\text{ affine }\check{T}\text{-toric varieties}\bigg\}$$
which underlies the Langlands duality phenomenon that we are interested in. 

Let $\sigma \subset X_*(T)_\RR \cong X^*(\check{T})_\RR$ be a full-dimensional, rational, and strongly convex polyhedral cone. We consider the affine toric variety $X$ with torus $T$
$$X = \spec \ZZ\big[\check{\sigma} \cap X^*(T)\big]$$ 
denoted by $U_\sigma$ previously in \eqref{eq: affineToricDefn}. Let $\check{\sigma} \subset X^*(T)_\RR \cong X_*(\check{T})_\RR$; note that $\check{\sigma}$ is itself full-dimensional, rational, and strongly convex, so one may consider the affine toric variety $\check{X}$ with torus $\check{T}$
$$\check{X} = \spec \ZZ\big[\sigma \cap X^*(\check{T})\big]$$
Note that by definition, the monoids $\check{\sigma} \cap X^*(T)$ and $\sigma \cap X^*(\check{T})$ are both saturated, so the corresponding affine varieties $X$ and $\check{X}$ are both normal. 

\begin{defn}\label{defn: toricDual}
    Let $\sigma \subset X_*(T)_\RR$ be a rational strongly convex polyhedral cone, and let $\check{\sigma} \subset X_*(\check{T})_\R$ be its dual cone. Then we say that the $T$-toric variety $X = \spec \ZZ\big[\check{\sigma} \cap X^*(T)\big]$ and $\check{X} = \spec \ZZ\big[\sigma \cap X^*(\check{T})\big]$ are \textit{toric dual varieties}.
\end{defn}

Applying Theorem \ref{thm: orbitConeCorresp} to a pair of toric dual varieties, we see that the inclusion-reversing bijection on faces
$$\big\{\text{Faces of } \sigma\big\} \longleftrightarrow \big\{\text{Faces of } \check{\sigma}\big\}$$
$$\tau \longmapsto \tau^* := \tau^\perp \cap \check{\sigma}$$
induces an inclusion-reversing bijection on orbits (and more importantly for us, orbit closures) 
\begin{equation}\label{eq: orbitDuality}
    \big\{T\text{-orbits on }U_\sigma\big\} \longleftrightarrow \big\{\check{T}\text{-orbits on }U_{\check{\sigma}}\big\}
\end{equation}
$$O_\tau \longleftrightarrow O_{\tau^*} \, , \, \overline{O}_\tau \longleftrightarrow \overline{O}_{\tau^*}$$
We will write the above bijection on orbits and orbit closures as
$$O \longleftrightarrow O^* \, \text{ and } \, \overline{O} \longleftrightarrow \overline{O}^*$$
when explicit mention of the cone $\tau$ is cumbersome. 

Tracing through the definitions of this bijection, we may verify the following useful fact. In the notation of Theorem \ref{thm: orbitConeCorresp}, we have
$$O_\tau \cong T/S_\tau \text{ and } \overline{O}_\tau$$
are $T/S_\tau = K_\tau$-toric subvarieties. Considering the short exact sequence
$$1 \longrightarrow S_\tau \longrightarrow T \longrightarrow T/S_\tau =: K_\tau \longrightarrow 1$$
and dualizing to obtain 
$$1 \longrightarrow \check{K}_\tau \longrightarrow \check{T} \longrightarrow \check{S}_\tau \longrightarrow 1$$
we see that $\check{K}_\tau$ has cocharacter lattice $X_*(\check{K}_\tau) = \tau^\perp \cap X_*(\check{T})$; indeed, $\tau^\perp$ is exactly the $\RR$-span of $\check{T}$-cocharacters that pair trivially with $T$-cocharacters that lie in $\tau$. Taking the intersection of $\tau^\perp$ with the cone $\check{\sigma}$ gives a convex cone 
$$\tau^* = X_*(\check{K}_\tau)_\RR \cap \check{\sigma} \subset X_*(\check{K}_\tau)_\RR$$
which corresponds to the orbit(-closure) $O_{\tau^*}$ (resp. $\overline{O}_{\tau^*}$) of $\check{X}$ with the structure of a $\check{T}/\check{K}_\tau = \check{S}_\tau$-toric variety. In other words, the bijection \eqref{eq: orbitDuality} can be endowed with more structure, as a bijection of toric subvarieties for certain sub and quotient tori:
$$K_\tau\text{-toric variety } \overline{O}_\tau \subset X \longleftrightarrow \check{S}_\tau\text{-toric variety } \overline{O}_{\tau^*} \subset \check{X}$$

\section{Automorphic and spectral periods}

In this section, we review the definitions of automorphic periods, spectral periods, and weak numerical duality following the conventions of \cite{CV}. We shall formulate these notions at the natural generality of reductive groups, although for the purposes of the current discussion it suffices to consider split tori. We refer the reader to Appendix A of \textit{loc. cit} for the comparison with conventions of \cite{BZSV}.

\subsection{Automorphic periods}

Let $G$ be a reductive group and $X$ a $G$-variety, both of which defined over $\F_q$. We choose an eigenvolume form on $X$ whose eigenvalue is given by a character $\eta: G \to \Gm$. Consider the following unitarily-normalized action of the adelic points of $G \times \Ggr$ on the space of adelic Schwartz functions $\mathcal{S}(X(\mathbb{A}))$:
for $\underg = (g, \lambda) \in G \times \Ggr(\mathbb{A})$ we define:
    $$\underg \star \Phi(x) = |\eta(\underg)|^{1/2} \, \Phi(x \underg)$$
The normalized theta series of $X$ on $G(\mathbb{A})$ is defined by
    multiplying the Poincar{\'e} series $\sum_{x \in X(F)} \, (g,\partial^{1/2}) \star \Phi(x)$
    by a unitary normalization factor:

\begin{align} \label{thetaXdef}
        \theta_X (g) &=  
        \Delta^{\frac{\dim X- \dim G}{4}} \sum_{x \in X(F)} \, (g, \partial^{1/2}) \star \Phi^0(x),  
\end{align} 
where $\Phi^0$ is the characteristic function of integral points. {\em In what follows, if we write simply $\Phi$ without other definition,
we always mean the characteristic function of integral points.} 

Using this theta series we can define the \textit{(normalized) automorphic $X$-period}\footnote{This period depends on the choice of eigenvolume form on $X$, but we will customarily suppress this dependence in our notation.} of an unramified automorphic form $f$ on $G$ by 
   \begin{equation} \label{PXdef} P_X(f) := \int_{[G]} \, \theta_X(g)f(g) \, dg .
   \end{equation}  

As is well-known, the smaller-dimensional $G$-orbits in $X$ often contribute divergent quantities to the automorphic period $P_X$ above, and thus require careful regularization to define. We will postpone this delicate issue for now by defining $\mathring{X} \subset X$ to be the union of maximal dimensional $G$-orbits\footnote{For toric varieties with \textit{generically connected stabilizers}, this is just the unique dense $T$-orbit. We shall examine the removal of the connectedness assumption on generic stabilizers in Section \ref{appendix: DM}.}, and define the regularized theta series of $X$ by
$$\mathring{\theta}_X :=  \Delta^{\frac{\dim X- \dim G}{4}} \sum_{x \in \mathring{X}(F)} \, (g, \partial^{1/2}) \star \Phi(x)$$
and the \textit{regularized automorphic $X$-period} of an unramified automorphic form $f$ on $G$ by
 \begin{equation} \label{PXregdef} \mathring{P}_X(f) := \int_{[G]} \, \mathring{\theta}_X(g)f(g) \, dg .
\end{equation}  

\begin{rem}[Convergence of the regularized automorphic period]
    Suppose $X$ is a conical $G$-variety with respect to the $\Ggr$-action. By equivariantly embedding $X$ into a linear $G$-representation and removing the origin of the vector space (which we are allowed to do, since the cone point in $X$ does not contribute to the regularized period), one can dominante $|\mathring{P}_X(f)|$ by a product of Tate's integrals $\int_{[\Gm]} \, |t|^{s-1} \sum_{x \in F^\times} \, \Phi(xt)\,  dt$, ensuring the convergence of the former. See Lemma 2.3 of \cite{CV} for a detailed argument. 
\end{rem}

\subsection{Spectral periods and nonabelian $L$-functions}

Suppose we have an unramified $L$-parameter $\varphi: \Gamma \to \check{G}(\kk)$. Then $\Gamma$ acts on $\check{X}$ through $\varphi$, and we will assume that the fixed point set is discrete, otherwise
we regard the definition as invalid. We fix, as on the automorphic side, an eigenvolume form on $\check{X}$ whose eigencharacter is denoted by $\check{\eta}: \check{G} \to \Gm$. Note that $\check{\eta}$ may also be understood as a (central) cocharacter of $G$. 

Let $x_0 \in \check{X}(\kk)$ be an arbitrary fixed point of the $\Gamma$-action.
We then define the {\em local $L$-function attached to $(\check{X}, x_0)$ at a place $v$} to be a graded trace
\begin{equation}\label{eq: localLfunction}
    L_v(\check{X}, x_0, \varphi, s) := \mathrm{gtr}\big(\mathrm{Fr}_v \times q_v^{-s} \, | \, \widehat{\mathcal{O}}_{\check{X},x_0}\big) \in \kk[[q_v^{-s}]]
\end{equation}
where $\widehat{\mathcal{O}}_{\check{X},x_0}$ is the completed local ring of $\check{X}$ at $x_0$. 

We define the {\em normalized spectral period}
by taking an Euler product, and then summing over fixed points: 

\begin{equation} \label{LXdef} L_{\check{X}}(\varphi) := \mathfrak{z} \, 
\Delta^{\frac{\varepsilon - \dim \check{X}}{4}}  \sum_{x \in \mathrm{Fix}(\varphi,\check{X})}  
\prod_v L_v(\check{X}, x, \varphi, \frac{1}{2})\end{equation}
where $\varepsilon$ is the $\Ggr$-weight on the eigenmeasure, and $\mathfrak{z}$ is the scalar by which the (central element) $\check{\eta}(\partial^{-1/2}) \in G(\mathbb{A})$ acts on the automorphic representation of $G$ parametrized by $\varphi$.\footnote{In the toric case which is our main interest, for an unramified character $\chi: [T] \to \kk^\times$ the number $\mathfrak{z}$ is simply evaluated as $(\chi \circ \check{\eta})(\partial^{-1/2})$.}

\begin{rem}[Convergence of the spectral period] 
    Suppose each fixed point $x \in \mathrm{Fix}(\varphi, \check{X})$ is conical, i.e., the grading on $\widehat{\mathcal{O}}_{\check{X}, x}$ satisfies the conditions of Definition \ref{defn: conical}. Then the summand indexed by $x$ in the spectral period can be dominated by an $L$-function of Langlands type (see the comment following Lemma 3.1 of \cite{CV}), ensuring the convergence of the former. Such conditions are always satisfied in the cases of interest here.
\end{rem}

\subsection{Weak numerical duality and discrepancy}\label{subsect: weakDuality}

Let $(G_1, G_2)$ be a pair of split Langlands dual reductive groups. Recall that in \S 4 of \cite{CV} we defined the notion of \textit{weak numerical duality} between $G_i$-varieties $X_i$: informally speaking, $X_1$ and $X_2$ are weakly dual if we have the following equalities of periods evaluated on cusp forms
$$\mbox{automorphic $X_1$-period} \sim \mbox{spectral $X_2$-period} $$
$$\mbox{automorphic $X_2$-period} \sim \mbox{spectral $X_1$-period} $$
where $\sim$ means agreement up to a quarter-integer power of the discriminant $\Delta$. Formally, we make the following definition, suitably modified and made more explicit in the case of toric varieties we have at hand:

\begin{defn}\label{defn: weakDuality}
    Let $T_1, T_2$ be split Langlands dual tori. For $i = 1,2$, let $X_i$ be an affine $T_i \times \Ggr$-toric variety defined over $\ZZ$, each equipped with $T_i$-eigenvolume forms. We say that $(T_1, X_1)$ and $(T_2,X_2)$ are numerically weakly dual if for any finite field $\F$ of size $q$ and any curve $\Sigma$ over $\F$ with discriminant $\Delta$, we have equalities 
    \begin{equation}\label{eqn: toricWeakDuality}
        \mathring{P}_{X_1}(\chi_1) = \Delta^{a_{12}/4}L_{X_2}(\varphi_1) \, \text{ and } \, \mathring{P}_{X_2}(\chi_2) = \Delta^{a_{21}/4}L_{X_1}(\varphi_2)
    \end{equation}
    where
    \begin{itemize}
        \item $\chi_i$ is any everywhere unramified unitary Hecke character on $[T_i]$ with Langlands parameter $\varphi_i$,
        \item and the equalities are understood to hold whenever the fixed locus $\varphi_i$ on the $X_i$ are discrete.\footnote{In fact, if the fixed locus of $\varphi_i$ is discrete, then it will have a unique fixed point, which is the $T_i$-orbit corresponding to the full-dimensional cone.}
        \item $(a_{12}, a_{21}) \in \ZZ^2$ is a pair of integers, which we term the \textit{discrepancy} of the pair $(X_1,X_2)$. If $a:= a_{12} = a_{21}$ (which is the case in all our current applications), we simply say that the discrepancy is the integer $a$. 
    \end{itemize}
\end{defn}

\section{Weak numerical duality of affine toric varieties}

In this section we prove one of the main results of this discussion, that \textit{toric dual varieties are weakly numerically dual}. 

\begin{thm}\label{thm: main1}
    Let $T$ and $\check{T}$ be Langlands dual split tori of rank $r$, and let $X, \check{X}$ be toric dual varieties with torus $T$ and $\check{T}$ respectively, equipped with dual gradings in the sense of Definition \ref{defn: dualGrading}. Then $(T,X)$ and $(\check{T},\check{X})$ are weakly numercially dual with discrepancy $r - \varepsilon$ where $\varepsilon$ is the weight of the $\Ggr$-grading. 
\end{thm}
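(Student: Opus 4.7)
The plan is to compute both sides of the weak duality equations directly, exploiting two features specific to the toric setting: the open $T$-orbit $\mathring X \subset X$ coincides with $T$ itself, so $\mathring P_X(\chi)$ admits a Tate-style unfolding against $T(F)$; and, since $\sigma$ is full-dimensional and strongly convex, the cone point $0$ is the unique $\check T$-fixed (hence only $\Gamma$-fixed) point of $\check X$, so $L_{\check X}(\varphi)$ is assembled from a single local $L$-factor read off the completed local ring at $0$. Both sides therefore present themselves as Euler products whose local factors are lattice sums indexed by $\sigma \cap X_*(T) = \sigma \cap X^*(\check T)$, and the claim becomes a direct local-to-global comparison.

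On the automorphic side, the normalization $\Delta^{(\dim X - \dim T)/4}$ is trivial because $\dim X = \dim T = r$, and unfolding gives
$$\mathring P_X(\chi) = |\partial|^{\varepsilon/4} \int_{T(\mathbb{A})} |\eta(g)|^{1/2} \, \Phi^0\bigl( g \cdot \epsilon_T(\partial^{1/2})\bigr) \, \chi(g) \, dg,$$
where $\epsilon_T \in X_*(T)$ is the cocharacter through which $\Ggr$ acts on $T$. Decomposing as an Euler product via $T(F_v)/T(\mathcal{O}_v) \cong X_*(T)$, using that $\varpi_v^\mu \in X(\mathcal{O}_v)$ iff $\mu \in \sigma$, and reindexing $\mu = \lambda + m_v \epsilon_T$, the local factor at $v$ simplifies to
$$I_v = q_v^{m_v (\langle \eta, \epsilon_T\rangle - \varepsilon)/2} \, \chi_v(\varpi_v^{-m_v \epsilon_T}) \sum_{\mu \in \sigma \cap X_*(T)} q_v^{-\langle \eta, \mu\rangle /2} \chi_v(\varpi_v^\mu).$$
On the spectral side, the completed local ring at $0 \in \check X$ is the $I$-adic completion of $\kk[\sigma \cap X^*(\check T)]$, and local class field theory identifies $\mu(\varphi_v(\mathrm{Fr}_v))$ with $\chi_v(\varpi_v^\mu)$, giving
$$L_v(\check X, 0, \varphi, \tfrac12) = \sum_{\mu \in \sigma \cap X^*(\check T)} q_v^{-\langle \mu, \epsilon_{\check T}\rangle/2} \chi_v(\varpi_v^\mu).$$

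The two lattice sums agree termwise provided Definition \ref{defn: dualGrading} identifies $\eta \in X^*(T)$ with $\epsilon_{\check T} \in X_*(\check T)$ (and symmetrically $\check\eta$ with $\epsilon_T$), and asserts $\varepsilon = \langle \eta, \epsilon_T\rangle$; granting this, the $q_v^{m_v}$-pre-factor in $I_v$ collapses to $1$. The global assembly then reads $\mathring P_X(\chi) = \bigl(\prod_v \chi_v(\varpi_v^{-m_v \epsilon_T})\bigr) \prod_v L_v(\check X, 0, \varphi, \tfrac12)$, and the product of character factors equals $\chi(\epsilon_T(\partial^{-1/2})) = (\chi \circ \check\eta)(\partial^{-1/2})$, precisely the prefactor in the definition of $L_{\check X}(\varphi)$. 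What remains is the normalizing factor $\Delta^{(\varepsilon - r)/4}$ in $L_{\check X}$, yielding $\mathring P_X(\chi) = \Delta^{(r - \varepsilon)/4} L_{\check X}(\varphi)$. The second duality equation follows by the identical argument with $(T,X)$ and $(\check T, \check X)$ interchanged, using that toric duality is involutive.

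The main obstacle is not the qualitative matching of the two lattice sums, which is transparent once one identifies the open orbit on the automorphic side with the unique fixed point on the spectral side. It is rather the careful coefficient bookkeeping: tracking the unitary factors $|\partial|^{\varepsilon/4}$ from the theta series, the local shift by $m_v \epsilon_T$ arising from the spin structure, the prefactor $\Delta^{(\varepsilon - r)/4}$ built into the spectral normalization, and the character twist $(\chi \circ \check\eta)(\partial^{-1/2})$, all against the Hecke-style identity $\prod_v q_v^{m_v} = \Delta^{1/2}$. It is precisely here that Definition \ref{defn: dualGrading} earns its keep, aligning the two $\Ggr$-cocharacters and the two eigencharacters so that all these pieces collapse into the single clean discrepancy $r - \varepsilon$.
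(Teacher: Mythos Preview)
Your argument is correct and follows essentially the same route as the paper's: unfold over the open $T$-orbit, Euler-factorize, identify the integrality condition $\varpi_v^\mu \in X(\mathcal{O}_v)$ with $\mu \in \sigma \cap X_*(T)$, and match the resulting lattice sum against the graded trace on $\kk[\sigma \cap X^*(\check T)]$ defining the local $L$-factor at the cone point; the only cosmetic difference is that the paper absorbs the $\partial^{1/2}$-shift via the global substitution $t \mapsto t\partial^{-1/2}$ before factorizing, extracting the prefactor $\chi(\partial^{-1/2})$ directly, rather than reindexing locally by $\mu = \lambda + m_v\epsilon_T$ and reassembling $\prod_v \chi_v(\varpi_v^{-m_v\epsilon_T})$ as you do. One small slip worth fixing: the parenthetical ``hence only $\Gamma$-fixed'' has the logic backwards, since $\Gamma$ acts through a homomorphism into $\check T$ and so fixes \emph{at least} as much as $\check T$ does; the correct justification is that under the discreteness hypothesis built into Definition~\ref{defn: weakDuality} the fixed locus must reduce to $\{0\}$, because any other $\Gamma$-fixed point would bring along its entire (positive-dimensional) $\check T$-orbit.
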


\subsection{Graded toric duality}

Note that the definition of both the automorphic and spectral periods depends on the action of an extra multiplicative group which we call $\Ggr$, and the choice of an eigenvolume form on the variety. We will spell out how we manage these parameters in the toric setting.

Let $T_1, T_2$ be split Langlands dual tori, and suppose we are given affine toric varieties $X_1, X_2$ with $T_1, T_2$-action, respectively. Suppose we fix a cocharacter
$$\check{\rho}: \Ggr \longrightarrow T_1$$
in order to view $X_1$ as a $T_1 \times \Ggr$-variety, and we choose a $T_1$-eigenvolume form on $X_1$ with $T_1$-eigencharacter
$$\eta: T_1 \longrightarrow \Gm$$
These choices induce the equivalent data of a cocharacter into the dual torus
$$\eta: \Ggr \longrightarrow T_2$$
with which we may equip $X_2$ with the structure of a $T_2 \times \Ggr$-variety. We also have a character 
$$\check{\rho}: T_2 \longrightarrow \Gm$$

\begin{lem}\label{lem: gradingAndVolumeForm}
    Suppose $X_1$ and $X_2$ are a pair of toric dual varieties in the sense of Definition \ref{defn: toricDual}. Let $x_1 \in X_1$ be the unique $T_1$-fixed point in $X_1$. Then for a cocharacter
    $$\check{\rho}: \Ggr \longrightarrow T_1$$
    the following conditions are equivalent:
    \begin{enumerate}
        \item The cocharacter $\check{\rho}$ defines a convergent local $L$-function at $(X_1, x_1)$ (defined by the expression \eqref{eq: localLfunction}) for $\mathrm{Re}(s) \gg 0$.
        \item The cocharacter $\check{\rho}$ makes $X_1$ a conical variety in the sense of Definition \ref{defn: conical}.
        \item The character $\check{\rho}: T_2 \longrightarrow \Gm$ is the $T_2$-eigencharacter of a $T_2$-eigenvolume form on $X_2$ with no poles.
    \end{enumerate}
    
\end{lem}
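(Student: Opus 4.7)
The plan is to show that each of the three conditions is equivalent to the single combinatorial statement
\[
\check{\rho} \in \operatorname{int}(\sigma) \subset X_*(T_1)_\RR,
\]
i.e.\ that $\check{\rho}$ pairs strictly positively with every nonzero element of $\check{\sigma}$, equivalently with each primitive generator of an edge of $\check{\sigma}$. Once this reformulation is in hand, each equivalence reduces to an elementary toric calculation.

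For $(1)$: since $\sigma$ is full-dimensional, $\check{\sigma}$ is strongly convex and the cone point $x_1 \in X_1$ is the unique $T_1$-fixed point, corresponding to the maximal ideal generated by all nonzero characters in $\check{\sigma} \cap X^*(T_1)$. The completed local ring $\widehat{\mathcal{O}}_{X_1,x_1}$ is then the completion of $\ZZ[\check{\sigma} \cap X^*(T_1)]$ at that ideal, and the $\Ggr$-weight of a character $\chi$ under $\check{\rho}$ is $\langle \chi, \check{\rho}\rangle$. Hence
\[
L_v(X_1, x_1, \varphi, s) \;=\; \sum_{\chi \in \check{\sigma} \cap X^*(T_1)} \chi(\mathrm{Fr}_v)\, q_v^{-s\langle \chi, \check{\rho}\rangle},
\]
which defines an element of $\kk[[q_v^{-s}]]$ convergent for $\mathrm{Re}(s) \gg 0$ precisely when $\langle \chi, \check{\rho}\rangle > 0$ for all nonzero $\chi \in \check{\sigma} \cap X^*(T_1)$, i.e.\ $\check{\rho} \in \operatorname{int}(\sigma)$.

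For $(3)$: write $\check{X} = U_{\check{\sigma}}$ with $\check{\sigma}$ viewed as a strongly convex cone in $X_*(T_2)_\RR$. Let $v_1, \ldots, v_n \in X_*(T_2) = X^*(T_1)$ be the primitive generators of the edges of $\check{\sigma}$, with corresponding $T_2$-invariant prime divisors $D_{v_j}$. The translation-invariant top form $\omega_0$ on the open $T_2$-orbit has trivial $T_2$-eigencharacter and satisfies $\operatorname{div}(\omega_0) = -\sum_j D_{v_j}$ by the toric canonical-divisor formula. Any $T_2$-eigenvolume form is $\chi \cdot \omega_0$ for some $\chi \in X^*(T_2) = X_*(T_1)$, and $\operatorname{div}(\chi\, \omega_0)$ has coefficient $\langle v_j, \chi\rangle - 1$ along $D_{v_j}$. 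So $\chi\, \omega_0$ extends to a pole-free form on $X_2$ iff $\langle v_j, \chi\rangle \ge 1$ for every $j$; specializing to $\chi = \check{\rho}$ and using integrality of pairings, this is equivalent to $\langle v_j, \check{\rho}\rangle > 0$ for all $j$, hence to $\check{\rho} \in \operatorname{int}(\sigma)$.

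For $(2)$: unpacking Definition~\ref{defn: conical} requires that the coordinate ring of $\check{X}$ have nonnegative $\Ggr$-weights with zero-weight piece equal to $\ZZ$ (or $\kk$), where the $\Ggr$-action is the one carried by $\check{\rho}$ through the identifications arranged in \S\ref{groupmeasures} and the paragraph preceding the lemma. The coordinate ring $\ZZ[\sigma \cap X^*(T_2)]$ decomposes into one-dimensional eigenspaces indexed by $\lambda \in \sigma \cap X^*(T_2) = \sigma \cap X_*(T_1)$, on which the weight is obtained by pairing against $\check{\rho}$. Strict positivity of all nonzero weights is therefore exactly $\check{\rho} \in \operatorname{int}(\sigma)$, completing the chain of equivalences.

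The main obstacle is purely bookkeeping: carefully tracking which of the identifications $X_*(T_1) \cong X^*(T_2)$ and $X^*(T_1) \cong X_*(T_2)$ is being used at each step, so that ``local $L$-function'' in $(1)$, ``eigenvolume form with no poles'' in $(3)$, and the conical grading in $(2)$ all get translated to the same strict-positivity condition on a linear functional against the cone $\check{\sigma}$. No deeper input beyond the toric canonical-divisor formula and the definition of the graded trace is required.
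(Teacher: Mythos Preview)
Your overall strategy---reduce each of (1), (2), (3) to the single combinatorial condition $\check{\rho} \in \operatorname{int}(\sigma)$---is exactly the paper's, and your treatment of (1) agrees with it verbatim. For (3) you invoke the toric canonical-divisor formula $\operatorname{div}(\omega_0) = -\sum_j D_{v_j}$, whereas the paper argues more directly: it writes the candidate form as $f\, d_{T_2,\mathrm{Haar}}$ for $f$ the monomial with character $\check{\rho}$, then checks regularity at each boundary point by pulling back along a primitive cocharacter $\mu$ and observing $\mu^*(f\, d_{T_2,\mathrm{Haar}}) = t^{\langle \check{\rho},\mu\rangle -1}\,dt$. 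Your route is more packaged; the paper's is more hands-on; they are of course equivalent, since the pullback computation is precisely how one verifies the order of $\omega_0$ along each $D_{v_j}$.

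There is, however, a genuine bookkeeping slip in your handling of (2)---ironically, exactly the kind you warn about in your last paragraph. You identify $\check{X}$ with $X_2$ and write its coordinate ring as $\ZZ[\sigma \cap X^*(T_2)] = \ZZ[\sigma \cap X_*(T_1)]$, then claim the $\Ggr$-weight of $\lambda \in X_*(T_1)$ is ``obtained by pairing against $\check{\rho}$.'' But $\check{\rho}$ also lies in $X_*(T_1)$, so this pairing is not defined; and the reference to \S\ref{groupmeasures} (which concerns Haar measures) does not supply one. The point is that $\check{\rho}:\Ggr\to T_1$ can only confer a $\Gm$-grading on a $T_1$-variety, so the $\check{X}$ in condition (2) must be read as $X_1$ (this is also how the paper's proof and the remark immediately following it treat the statement). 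With that reading the argument is immediate and literally the same computation as (1): the coordinate ring is $\ZZ[\check{\sigma}\cap X^*(T_1)]$, the weight of $\chi$ is $\langle \chi,\check{\rho}\rangle$, and strict positivity of all nonzero weights is $\check{\rho}\in\operatorname{int}(\sigma)$.
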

\begin{proof}
    Suppose $X_i$ are defined by cones $\sigma_i \subset X_*(T_i)_\RR$. From expression \eqref{eq: localLfunction} we see that $\check{\rho}$ defines a convergent local $L$-function at $(X_1,x_1)$ if and only if each level set of $\check{\rho}$ on $X^*(T_1)_\R$ contains finitely many lattice points of $\check{\sigma}_1 \cap X^*(T_1)$ and takes nonnegative values on them. This is equivalent to saying that $\check{\rho}$ lies in the cone $\sigma_1$ (to ensure nonnegativity), and does not lie in any proper faces of $\sigma_1$ (for the finiteness of lattice points in each level set). This corresponds precisely to the conditions of Definition \ref{defn: conical}, hence the equivalence of (1) and (2). 
    
    On the other hand, any lattice point in $\sigma_1$, now viewed as a cone in $X^*(T_2)_\RR$, would define an eigencharacter for an eigenvolume form of $T_2$, and vice versa. Indeed, such a lattice point $\lambda$ corresponds to a monomial $T_2$-eigenfunction $f$ on $X_2$ with eigencharacter $\lambda$, hence $fd_{T_2, \mathrm{Haar}}$ gives an eigenvolume form on $X_2$ with eigenvalue $\lambda$, where $d_{T_2, \mathrm{Haar}}$ is the Haar measure on the dense orbit $T_2 \subset X_2$. To see that $fd_{T_2, \mathrm{Haar}}$ is indeed well-defined on $X_2$, note first that it is well-defined on the dense open $T_2$-orbit. Now for every point $x$ in the closed complement, there is a point $y$ in the dense open orbit and a primitive cocharacter $\mu: \Gm \to T_2$ such that
    $$\lim_{t \to 0} \, y \cdot \mu(t) = x$$
    To see that $fd_{T_2, \mathrm{Haar}}$ is regular at $x$ we pull-back via $\mu$ and verify that
    $$\mu^*fd_{\mathrm{T_2}, \mathrm{Haar}}(t) = t^{\langle \lambda, \mu\rangle-1} \, dt$$
    Since $\langle \lambda, \mu\rangle \geq 1$ for all cocharacters $\mu$, we see that $fd_{T_2, \mathrm{Haar}}$ is regular everywhere. 
\end{proof}

In view of Lemma \ref{lem: gradingAndVolumeForm}, we make the following definition.
\begin{defn}\label{defn: dualGrading}
    Let $X_i$ be $T_i$-toric varieties which are toric duals, for $i = 1,2$. We say that $(T_i, X_i)$ are equipped with \textit{dual gradings} if we further choose cocharacters $\check{\rho}_1: \Ggr \to T_1$ and $\check{\rho}_2: \Ggr \to T_2$, and we assume that $\check{\rho}_1,\check{\rho}_2$ satisfy the equivalent conditions of Lemma \ref{lem: gradingAndVolumeForm}. We understand that $X_i$ is equipped with the corresponding $T_i \times \Ggr$-actions and the $T_i$-eigenvolume forms. 
\end{defn}
\begin{rem}\label{rem: weightOfGrading}
    Note that the $\Ggr$-weight on the eigenvolume form of both $X_1$ and $X_2$ (equipped with dual gradings $(\check{\rho}_1, \check{\rho}_2)$) can be computed as
    $$\varepsilon = \langle \check{\rho}_1, \check{\rho}_2\rangle \in \ZZ$$
    We will simply refer to this positive integer $\varepsilon$ as the \textit{$\Ggr$-weight of the dual grading}. Note that it is always possible to pick them so that $\varepsilon$ matches $r = \mathrm{dim}(X) = \mathrm{dim}(\check{X})$.
\end{rem}

We will assume for the rest of this section that toric varieties are \textit{graded}, i.e., that for a $T$-toric variety $X$ we have chosen 
\begin{itemize}
    \item a cocharacter $\check{\rho}: \Ggr \to T$ to view $X$ as a $T \times \Ggr$-variety, and
    \item a volume eigenform $\eta$, whose eigencharacter we also denote by $\eta$, associated to some lattice point in the dual of the cone defining $X$. 
\end{itemize}
In particular, we will view $\Ggr$ as a subgroup of $T$ without explicitly writing the cocharacter $\check{\rho}$.

\subsection{Weak numerical duality}
Note that the factor of $\Delta^{\frac{\mathrm{dim} \, X - \mathrm{dim} \, T}{4}}$ in the definition of $\theta_X$ in \eqref{PXdef} disappears, and the (regularized) automorphic period is by definition
\begin{equation}
    \mathring{P}_X(\chi) = |\eta(\partial^{1/2})|^{1/2} \, \int_{[T]} \, \chi(t) \, |\eta(t)|^{1/2}\sum_{x \in T(F)} \, \Phi(xt\partial^{1/2}) \, dt
\end{equation}
Making the change of variables $t \mapsto t\partial^{-1/2}$, we have
\begin{equation}
        \mathring{P}_X(\chi) = \chi(\partial^{-1/2}) \, \int_{[T]} \, \chi|\eta|^{1/2}(t)\sum_{x \in T(F)} \, \Phi(xt) \, dt
\end{equation}
Unfolding in the usual way we see that the regularized automorphic period Euler factorizes as follows: taking an arbitrary rational point $x_O \in O$,
\begin{align*}\label{eqn: eulerFactors}
\mathring{P}_X(\chi) &= \chi(\partial^{-1/2})\int_{T(\mathbb{A})} \, \chi|\eta|^{1/2}(t)\Phi(x_Ot) \, dt \\
&= \chi(\partial^{-1/2})\prod_v \, \int_{T(F_v)} \, \mathbf{1}_{X(\mathcal{O}_v)}(x_Ot)\chi|\eta|^{1/2}(t) \, dt\\
    &= \chi(\partial^{-1/2})\prod_v \, \sum_{\lambda \in X_*(T)} \, \mathbf{1}_{X(\mathcal{O}_v)}(x_O\varpi_v^\lambda)\chi_v(\varpi_v^\lambda) \, q_v^{-\langle \eta,\lambda\rangle /2}
\end{align*}
The Schwartz function $\mathbf{1}_{\mathcal{O}_v}$ exactly picks out those $\lambda$ such that
$$\langle \lambda, x\rangle \geq 0 \text{ for all } x \in \check{\sigma} \cap X^*(T)$$
Indeed, the point $\varpi^\lambda_v \in T(F_v) \subset X(F_v)$ lies in $X(\mathcal{O}_v)$ if all integral functions take integral values on $\varpi^\lambda_v$. But this is exactly the case if all the characters in $\check{\sigma} \cap X^*(T)$ pair positively with $\lambda$. So each Euler factor here can be written as
\begin{equation}\label{eq: mainOrbitFinal}
    \sum_{\lambda \in X_*(T): \langle \lambda, \check{\sigma}\rangle \geq 0} \, \chi(\varpi^\lambda_v)q^{-\langle \eta, \lambda\rangle/2}_v = \mathrm{gtr}\big(\mathrm{Fr}_v \times q_v^{-1/2} | \, \kk[\sigma \cap X^*(\check{T})]\big)
\end{equation}
since
$$\lambda \in X_*(T) \text{ with } \langle \lambda, \check{\sigma} \rangle \geq 0 \iff \lambda \in \sigma \subset X_*(T)_\RR \cong X^*(\check{T})_\RR$$
The grading on $\kk[\sigma \cap X^*(\check{T})]$ with respect to which we take Frobenius trace on the right hand side of \eqref{eq: mainOrbitFinal} is given by $|x| = \langle \eta,x\rangle$ for all $x \in X^*(\check{T})$. We have thus finished the demonstration of Theorem \ref{thm: main1}.

\begin{rem}
    We see from Theorem \ref{thm: main1} that even for singular toric varieties, one may choose dual gradings so that we have weak numerical duality \textit{without discrepancy}. Contrast this with the singular examples of \cite{CV} and the Eisenstein example (see Appendix B.4.2 of \textit{op. cit}).
\end{rem}

\begin{rem}[Choice of Schwartz function] \label{rem: BNS}
    When $X$ is singular, there are (at least) two natural choices for the Schwartz function in defining the automorphic period using Equation \eqref{PXdef}: one may take the ``IC-function of generically smooth arcs" 
    $$\Phi^{\mathrm{IC}} = \otimes_v \, \mathrm{IC}_{\mathcal{L}^\circ_v X}$$
    as defined in the work of Bouthier--Ngô--Sakellaridis (see Section 1 of \cite{Bouthier-Ngo-Sakellaridis}), or one may take the ``indicator function of arc space" 
    $$\Phi^0 = \otimes_v \, \mathbf{1}_{X(\mathcal{O}_v)}$$ 
    as we have done here and as is done in \cite{CV}. Loosely speaking, the latter counts ``sections of the associated $X$-bundle of a principal $G$-bundle", while the former is the function-theoretic analogue of the intersection cohomology complex of the generically smooth arc space of $X$.
    
    Both options have their advantages. In this article and in \textit{loc. cit}, we argue that the $\Phi^0$ may be better suited for studying weak numerical duality, especially if one admits the notion of nonabelian $L$-functions. For more discussion on this topic we refer the reader to \S 1.4 and Remark 2.2 of \textit{loc. cit}.
\end{rem}

\section{Regularization of small orbits}\label{sect: regularize}

Let $\{O\}$ be the set of $T(F)$-orbits on $X(F)$, $x_O$ be an arbitrary base point in the orbit $O$, and $S_O$ be the stabilizer of $x_O$ in $T$. Formally, the \textit{unregularized} automorphic period \eqref{PXdef} can be unfolded as
\begin{equation}\label{eq: orbitDecomp}
    P_X(\chi) = \chi(\partial^{-1/2}) \,\sum_O \, \int_{S_O(\mathbb{A}) \backslash T(\mathbb{A})} \, \Phi(x_O t) \int_{[S_O]} \, \chi|\eta|^{1/2}(st) \, ds \, dt
\end{equation}

Our main goal in this Section is Theorem \ref{thm: main2}, in which we upgrade Theorem \ref{thm: main1} to a more precise duality which captures information about all orbits in \eqref{eq: orbitDecomp} via regularization procedures on both the automorphic and spectral sides.

\subsection{Automorphic period}\label{subsect: otherOrbits}

Note that when $\chi|\eta|^{1/2}|_{[S_O]}$ is nontrivial, the inner integral over $[S_O]$ in \eqref{eq: orbitDecomp} formally vanishes. Thus we make the following 
\begin{defn}
    Let $\chi: [T] \to \kk^\times$ be a character, and $X$ a graded $T$-toric variety, equipped with an eigenform with eigenvalue $\eta \in X^*(T)$. Let $\{O\}$ be the set of $T$-orbits on $X$. For each orbit $O$, we write $S_O \subset T$ for the stabilizer of a generic point in the orbit $O$. We say $\chi$ is \textit{$(X,\eta)$-cuspidal} if for all proper stabilizer subtori $S_O \subsetneq T$ we have $\chi|\eta|^{1/2}|_{[S_O]} \neq \mathbf{1}$.
\end{defn}
As the eigenform $\eta$ is always understood implicitly, we will often say \textit{$X$-cuspidal} instead. 
\begin{rem}
    Of course, cuspidality is not a useful notion for automorphic forms on $T$, but we make this definition in allusion to the fact that cuspidality is also the condition of the vanishing of certain periods. More precisely, in the general case when one considers a reductive group $G$ and the automorphic period attached to a $G$-space $X$, one sees a similar orbit decomposition as in Equation \eqref{eq: orbitDecomp}. Usually one discards the orbits that are not dense by appealing to cuspidality. 
\end{rem}

Let $O$ be an orbit which is not dense in $X$; we formally calculate its contribution to the unregularized automorphic period. Note that $O$ is itself a torsor for a quotient torus of $T$; namely, if $S \subset T$ is the stabilizer of a point in $O$, then $O$ is a $K := T/S$-torsor. It contributes the following term to the automorphic period $P_X(\chi)$ as in \eqref{eq: orbitDecomp}:
$$\chi(\partial^{-1/2}) \int_{[T]} \chi|\eta|^{1/2}(t) \sum_{x \in O(F)} \, \Phi(xt) \, dt = \chi(\partial^{-1/2})\int_{K(\mathbb{A})} \, \Phi(x_Ot) \int_{[S]} \, \chi|\eta|^{1/2}(st) \, ds \, dt$$
As mentioned above, if $\chi|\eta|^{1/2}$ is nontrivial on $[S]$ then the inner integral vanishes identically; therefore, we assume that $\chi|\eta|^{1/2}|_{[S]} = \mathbf{1}$ for this calculation and note that we obtain an Euler-factorizable expression
\begin{equation} \label{eqn: formalZeta1}
    \mathrm{vol}([S])\, \int_{K(\mathbb{A})}\, \Phi(x_Ot)\chi|\eta|^{1/2}(t) \, dt =  \mathrm{vol}([S]) \prod_v \,\int_{K_v} \mathbf{1}_{\mathcal{O}_v}(x_Ot)\chi|\eta|^{1/2}(t) \, dt
\end{equation}
Of course, $\mathrm{vol}([S])$ gives a divergent factor $\zeta(1)^{\mathrm{dim}(S)}$, but we carry it around formally for now to see that it matches an equivalent divergent factor on the spectral side. 

Note that each Euler factor now resembles a main term calculation for the $K$-toric variety $\overline{O}$; in particular, according to Theorem \ref{thm: toricOrbits}, if $\sigma$ is the cone corresponding to the $T$-orbit (closure) $\overline{O}$, then as a $K$-toric variety it is associated to the cone 
$$\sigma_K := \text{ image of }\sigma \text{ under the projection } X_*(T)_\RR \to X_*(K)_\RR$$
The same argument as in the main term calculation computes each Euler factor as
$$\mathrm{gtr}\big(\mathrm{Fr}_v \times q_v^{-1/2} \, | \, \kk[\sigma_K \cap X^*(\check{K})] \big)$$
where $\check{K} = \mathrm{Ker}(\check{T} \to \check{S})$ is the dual torus to $K$.

\subsection{Spectral period}

Let us now consider the spectral period of $\check{X}$. On the automorphic side we suggested a notion of \textit{cuspidality relative to $X$}; on the spectral side we introduce a notion of \textit{irreducibility relative to $\check{X}$} for $L$-parameters. 
\begin{defn} 
    Let $\varphi: \Gamma_F \to \check{T}$ be an $L$-parameter, and $\check{X}$ a graded $\check{T}$-toric variety, with grading $\rho: \Ggr \to \check{T}$. We say $\varphi$ is \textit{$(\check{X}, \rho)$-generic} if the action of $\Gamma_F$ on $\check{X}$ via 
    $$\Gamma_F \overset{\varphi \times \varpi^{1/2}}{\longrightarrow} \check{T} \times \Ggr \overset{1 \times \rho}{\longrightarrow} \check{T} \acts \check{X}$$
    has a unique fixed point on $\check{X}$, where $\varpi^{1/2}$ is a choice of square root of the cyclotomic character.
\end{defn}
As the grading $\rho$ is always understood implicitly, we will often say \textit{$\check{X}$-generic} instead. 
\begin{rem} \label{rem: extendedParameter}
    The homomorphism $\varphi \times \varpi^{1/2}: \Gamma_F \to \check{G} \times \Ggr$, for $\varphi$ an $L$-parameter into a general reductive group $\check{G}$, is called an \textit{extended} $L$-parameter in the terminology of \cite{BZSV}. The idea is that the norm character $| \, \cdot \, |$ corresponds under our normlization of class field theory (applied to $\Ggr$) to the cyclotomic character $\varpi$.
\end{rem}

The following Lemma is immediate from definitions, and plays the role of the familiar heuristic that the $L$-parameter of a cuspidal automorphic form should have big image in the dual group. 
\begin{lem}\label{lem: cuspidalImpliesUniqueFixPoint}
    Let $(T, X)$ and $(\check{T}, \check{X})$ be a graded toric dual pair. Then an unramified character of $[T]$ is $X$-cuspidal if and only if its $L$-parameter is $\check{X}$-generic. 
\end{lem}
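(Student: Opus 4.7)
The plan is to translate both conditions into a shared combinatorial/Galois-theoretic language indexed by the face poset of $\sigma$, then observe that they match term-by-term. The essential input is the toric duality of Section 2.2, which says that for each face $\tau$ of $\sigma$, the stabilizer subtorus $S_\tau \subset T$ of the orbit $O_\tau \subset X$ is Langlands dual to the structure torus $\check{S}_\tau = \check{T}/\check{K}_\tau$ under which $O_{\tau^*} \subset \check{X}$ is a torsor. This provides one canonical family $\{S_\tau, \check{S}_\tau\}_\tau$ in which both conditions will be expressed.

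On the spectral side, I would note that $\Gamma_F$ acts on the torsor $O_{\tau^*}$ via the composition $\Gamma_F \xrightarrow{\varphi \cdot (\rho \circ \varpi^{1/2})} \check{T} \twoheadrightarrow \check{S}_\tau$. Because $\check{S}_\tau$ acts freely and transitively on $O_{\tau^*}$, the fixed locus of $\Gamma_F$ on $O_{\tau^*}$ is either empty (if this composition is non-trivial) or all of $O_{\tau^*}$ (if trivial). The closed orbit $\tau^* = \check{\sigma}$ is always a fixed point, so the fixed locus on $\check{X}$ is discrete (i.e.\ reduces to this unique point) if and only if for every face $\tau \neq 0$, the composition $\Gamma_F \to \check{S}_\tau$ is non-trivial.

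On the automorphic side, I would apply class field theory: unramified characters of $[T]$ correspond to $L$-parameters into $\check{T}$, and restriction from $[T]$ to $[S_\tau]$ corresponds under this correspondence to postcomposition with the quotient $\check{T} \twoheadrightarrow \check{S}_\tau$ dual to $S_\tau \hookrightarrow T$. The character $\chi |\eta|^{1/2}$ has $L$-parameter $\varphi \cdot (\rho \circ \varpi^{1/2})$ (as explained in Remark \ref{rem: extendedParameter}, since $|\cdot|$ corresponds to $\varpi$ and $\eta$ as a character of $T$ is the cocharacter $\rho$ of $\check{T}$). Therefore $\chi|\eta|^{1/2}|_{[S_\tau]} \neq \mathbf{1}$ iff the same composition $\Gamma_F \to \check{S}_\tau$ is non-trivial. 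Matching this against the spectral characterization gives the claimed equivalence.

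The only boundary subtlety — and arguably the one technical point to address — is the case $\tau = \sigma$ (so $S_\tau = T$), which is excluded from the ``proper'' subtori appearing in $X$-cuspidality but which genericity demands (taking $\check{S}_\sigma = \check{T}$). I would dispatch this by the automatic unitarity argument: $\chi$ is unitary while $\rho \circ \varpi^{1/2}$ sends Frobenius at $v$ to $\rho(q_v^{-1/2})$, a strictly positive non-unit element of $\check{T}(\CC)$, since by the dual grading hypothesis $\rho$ is a non-trivial cocharacter in the interior of $\sigma$. Hence the product $\varphi \cdot (\rho \circ \varpi^{1/2}): \Gamma_F \to \check{T}$ is automatically non-trivial, so the $\tau = \sigma$ instance of genericity is vacuous. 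Beyond this bookkeeping, the argument is purely formal and is indeed ``immediate from definitions''.
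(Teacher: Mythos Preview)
Your argument is correct and follows the same route as the paper: both conditions are translated, via class field theory and the orbit--cone correspondence, into the single assertion that the extended parameter $\varphi \cdot (\rho \circ \varpi^{1/2})$ lands in none of the stabilizer subtori $\check{K}_\tau \subset \check{T}$ for $\tau \neq \{0\}$. You are in fact slightly more careful than the paper in explicitly isolating and dispatching the boundary case $\tau = \sigma$ (relevant e.g.\ in rank one) via the unitarity argument; the only slip is that $\rho$ lies in the interior of $\check{\sigma} \subset X_*(\check{T})_\RR$, not of $\sigma$.
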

\begin{proof}
    We write $\eta$ for the eigenform on $X$, and $\rho: \Ggr \to \check{T}$ be the data given by the graded toric pair. For an orbit $O$ on $X$, we write $S_O$ as its stabilizer and $K_O = T/S_O$ as its quotient torus. We write $\pi_O: \check{T} \to \check{S}_O$ for the quotient of dual tori. For a character $\chi$ of $[T]$ we will write $\varphi(\chi): \Gamma_F \to \check{T}$ for its $L$-parameter. Then we see that
    \begin{align*}
        \chi \text{ is } (X,\eta)\text{-cuspidal} &\iff \text{ for all } O \text{ with } S_O \subsetneq T, \chi|\eta|^{1/2}|_{[S_O]} \neq \mathbf{1}_{[S_O]}\\
        &\iff \varphi(\chi \times |\eta|^{1/2}) = \varphi(\chi) \times \rho \circ \varpi^{1/2} \\
        & \quad \quad \quad \text{ does not factor through } \check{K}_O = \mathrm{Ker}(\pi_O)
    \end{align*}

    By the orbit-cone correspondence (Theorem \ref{thm: orbitConeCorresp}), we see that $\varphi(\chi \times |\eta|^{1/2})$ being not contained in any of the proper subtori of $\check{T}$ associated to the proper faces of $\check{\sigma}$ is equivalent to it not fixing pointwise any other $\check{T}$-orbits besides the one associated to $\check{\sigma}$ itself, which is the unique $\check{T}$-fixed point. 
\end{proof}

\subsection{Regularization and period duality}
In the case when $\chi$ is not $X$-cuspidal, we will give a regularization of automorphic and spectral periods which allows us to ``formally cancel $\zeta(1)$'s". 

As is usual in the regularization of automorphic periods, the goal is to interpret the period integral as an intertwiner of representations. Let $O \subset X$ be a $T$-orbit which is nondense; we pick an arbitrary base point $x_O \in O$ to identify $T/S \cong O$ where $S = \mathrm{Stab}_T(x_O)$, and we write $K = T/S$ as the quotient torus. Formally speaking, we have some ``Schwartz space" denoted by $\mathcal{S}(O_\mathbb{A})$, which at least has to contain the restriction of the function $\mathbf{1}_{X(\mathcal{O})}$ to $O_\mathbb{A}$. Then the formal integral in \S \ref{subsect: otherOrbits} represents an intertwiner
$$\bigg[P_O(\chi):\Phi \mapsto \int_{[T]} \chi|\eta|^{1/2}(t)\sum_{x \in O(F)} \, \Phi(xt) \, dt\bigg] \in \mathrm{Hom}_{T(\mathbb{A})}(\chi|\eta|^{1/2} \otimes \mathcal{S}(O(\mathbb{A})), \kk)$$
In the event that $\chi|\eta|^{1/2}|_{S(\mathbb{A})} = \mathbf{1}_{S(\mathbb{A})}$, we can quotient by $S$ and rewrite this Hom-space as
$$P_O(\chi) \in \mathrm{Hom}_{K(\mathbb{A})}(\chi|\eta|^{1/2} \otimes \mathcal{S}(\mathcal{O}_\mathbb{A}), \kk)$$
This heuristic calculation justifies the following
\begin{defn}\label{def: regularizeAut}
   Let $(X,\eta)$ be a graded $T$-toric variety with eigenform $\eta$. Let $O \subset X$ be a $T$-orbit. Let $S \subset T$ be the stabilizer of a rational point in $O$, and let $K = T/S$. We define the \textit{regularized automorphic contribution of the orbit $O$} of a character $\chi:[T] \to \kk^\times$ as follows:
\begin{equation}\label{eqn: regAutPeriod}
    P_{O \subset X}^{\mathrm{reg}}(\chi) :=  \chi(\partial^{-1/2}) \cdot \begin{cases} \, \mathring{P}_O(\chi) = \int_{K(\mathbb{A})} \, \Phi(x_O t)\chi|\eta|^{1/2}(t) \, dt \, \\ \quad \quad \quad \quad  \text{ if } \chi|\eta|^{1/2}|_{[S]} = \mathbf{1} \text{ and } \chi|\eta|^{1/2}|_{[K]} \text{ is } \overline{O}\text{-cuspidal}\\
     \, 0 \, \text{ otherwise}
    \end{cases}
\end{equation}
where $\Phi = \otimes_v \, \mathbf{1}_{X(\mathcal{O}_v)}$.
\end{defn}
In particular, the regularized automorphic contribution of a non-dense orbit $O$ of a character $\chi$ is nonzero only if $\chi$ fails to be $X$-cuspidal.

On the spectral side, we also examine the contribution of orbits one by one. In the case when the $L$-parameter is generic, only the smallest orbit, i.e., the $\check{T}$-fixed point will contribute. If the $L$-parameter factors through a subtorus making it nongeneric, then it must fix pointwise some non-minimal orbit. 

\begin{defn}\label{def: regularizeSpec}
    Let $(\check{X}, \rho)$ be a graded $\check{T}$-toric variety with grading $\rho: \Ggr \to \check{T}$. Let $\check{\sigma} \in X_*(\check{T})_\RR$ be the cone defining $\check{X}$ as a toric variety, whose dual cone is $\sigma \subset X^*(\check{T})_\R$. Suppose $\varphi_0: \Gamma_F \to \check{T}$ is an $L$-parameter whose extended $L$-parameter (see Remark \ref{rem: extendedParameter}) we denote by $\varphi$, and let $O \subset \check{X}$ be a $\check{T}$-orbit with stabilizer $\check{K} \subset \check{T}$. Then we define the \textit{regularized spectral contribution of the orbit $O$} of $\varphi$, denoted $L_{O \subset \check{X}}^{\mathrm{reg}}(\varphi)$, as follows: 
    \begin{itemize}
        \item If $\varphi$ does not fix $O$ pointwise, then we set
    $$L_{O \subset \check{X}}^{\mathrm{reg}}(\varphi) = 0$$
        \item  Otherwise, suppose $\varphi$ fixes $O \subset \check{X}$ pointwise. If there is another $\check{T}$-orbit $O' \neq O$ for which $O \subset \overline{O}'$ and such that $\varphi$ fixes $O'$ pointwise, then we also set
    $$L_{O \subset \check{X}}^{\mathrm{reg}}(\varphi) = 0$$
        \item     Finally, suppose $\varphi$ fixes $O$ pointwise, and $O$ is not contained in any other $\varphi$-pointwise-fixed orbit closure. Then $\varphi$ is valued in $\check{K}$ and we set
    \begin{equation}\label{eqn: regSpecPeriod} L_{O \subset \check{X}}^{\mathrm{reg}}(\varphi) = \mathfrak{z} \,  \Delta^{\frac{\varepsilon - \mathrm{dim}(\check{X})}{4}}\prod_v \, \mathrm{gtr}\big(\varphi(\mathrm{Fr}_v) \times q_v^{-1/2} | \, \kk[\sigma_{\check{K}} \cap X^*(\check{K})]\big)\end{equation}
    where $\sigma_{\check{K}}$ is the image of $\check{\sigma}$ under the surjection $X^*(\check{T})_\R \twoheadrightarrow X^*(\check{K})_\R$, and $\mathfrak{z}$ is the scalar defined in \eqref{eqn: unramSpecPeriod}.
    \end{itemize}
   
\end{defn}
The second bulleted case is handled as such to avoid having to ``cancel extra $\zeta(1)$'s"; it is analogous to requiring that a character be $\overline{O}$-cuspidal for the orbit $O$ to contribute to the automorphic period in Definition \ref{def: regularizeAut}. 

\begin{rem}
    Note that our definition of regularized automorphic contribution of an orbit $O$ recovers the main $T$-orbit in $X$ of Equation \eqref{eq: orbitDecomp}, and that our definition of regularized spectral contribution of an orbit $O$ recovers \eqref{LXdef} when $O$ is the unique $\check{T}$-fixed point in $\check{X}$.

    In fact, the argument of Lemma \ref{lem: cuspidalImpliesUniqueFixPoint} can be refined using the orbit-cone correspondence: an orbit $O \subset X$ contributes to the automorphic period of $\chi: [T] \to \CC^\times$ if and only if its extended $L$-parameter factors through $\check{K}_O$, in which case it fixes pointwise the orbit $O^* \subset \check{X}$ (see \eqref{eq: orbitDuality} again for the definition of the correspondence of orbits in toric dual varieties). 
\end{rem}

\begin{rem}
    Morally speaking, and literally so in the case when the normal sheaf $\mathcal{N}_{\overline{O}/\check{X}}$ of $\overline{O}$ is a vector bundle (or perfect complex) over $\overline{O}$, the regularized spectral contribution of $O$ of an $L$-parameter $\varphi$ fixing $O$ pointwise is the $L$-function of $\varphi$ on $\mathcal{N}_{\overline{O}/X}|_x$ where $x \in \overline{O}$ is a generic point in the orbit $O$. 
    
   To explain in slightly more detail, we let $i: \overline{O} \hookrightarrow \check{X}$ be the closed embedding of the orbit closure, and consider the distinguished triangle of tangent complexes at the point $x$
    $$\mathbf{T}i|_x \longrightarrow  \mathbf{T}_{\overline{O},x} \longrightarrow i^*\mathbf{T}_{X,x} \overset{+1}{\longrightarrow}$$
    By a result of Quillen (see Appendix A.2 of \cite{CV} for a complete discussion), the alternating product of $L$-functions attached to the cohomologies of $\mathbf{T}i|_x$ is given by
    \begin{equation}\label{eq: regularizedSpectralPeriodInterp}
        \prod_q \, \mathrm{det}(1-t \,\mathrm{Fr}_v \, | \, \mathbf{T}_qi|_x)^{(-1)^{q+1}} = \frac{\mathrm{gtr}\big(t \,\mathrm{Fr}_v \, | \, \widehat{\mathcal{O}}_{\check{X},x}\big)}{\mathrm{gtr} \big(t \, \mathrm{Fr}_v \,| \, \widehat{\mathcal{O}}_{\overline{O},x}\big)}
    \end{equation}
    where the power of $t$ is keeping track of the grading on both sides. When the $L$-parameter factors through $\check{K} \subset \check{T}$, the denominator is just the Hilbert series of graded dimensions of the ring $\widehat{\mathcal{O}}_{\overline{O},x}$ since $\mathrm{Fr}_v$ will act trivially, which is equal to the local zeta function at $1$ to the $\mathrm{dim}(\check{S})$th power. In this case, we observe that the right hand side of \eqref{eq: regularizedSpectralPeriodInterp} becomes
    $$\frac{\mathrm{gtr}\big(t \, \mathrm{Fr}_v \, | \, \CC[\sigma \cap X^*(\check{T})]\big)}{\zeta_v(1)^{\mathrm{dim}(\check{S})}} = \mathrm{gtr}\big(t \, \mathrm{Fr}_v \, | \, \CC[\sigma_{\check{K}} \cap X^*(\check{K})]\big)$$
    where $\sigma_{\check{K}}$ is the image of $\sigma$ under the projection $X^*(\check{T})_\RR \to X^*(\check{K})_\RR$. This recovers our definition of regularized spectral contribution, and note that if we take a product over all places $v$, we have divided out formally by a factor of $\zeta(1)^{\mathrm{dim}(\check{S})}$ which formally matches the $\zeta(1)^{\mathrm{dim}(S)}$ in Equation \eqref{eqn: formalZeta1}.
\end{rem}

\begin{defn}
    Let $(T, X)$ and $(\check{T}, \check{X})$ be a pair of affine toric varieties. We say that $(T,X)$ and $(\check{T}, \check{X})$ have \textit{Langlands dual periods} with discrepancy $(a, \check{a}) \in \ZZ \times \ZZ$ if for every $T$-orbit $O$ and $\check{T}$-orbit $O^*$ that correspond to each other under \eqref{eq: orbitDuality}, we have
    $$P_{O \subset X}^{\mathrm{reg}}(\chi) = \Delta^{a/4} \, L_{O^* \subset \check{X}}^{\mathrm{reg}}(\varphi_\chi)$$
    for all characters $\chi: [T] \to \CC^\times$ with $L$-parameter $\varphi_\chi$, and
    $$P_{O^* \subset \check{X}}^{\mathrm{reg}}(\mu) = \Delta^{\check{a}/4}L_{O \subset X}^{\mathrm{reg}}(\varphi_\mu)$$
    for all characters $\mu: [\check{T}] \to \CC^\times$ with $L$-parameter $\varphi_\mu$.
\end{defn}

Combining the formal calculation of \eqref{eq: orbitDecomp} and applying Theorem \ref{thm: main1} to each orbit contribution, we arrive at the following refinement of weak numerical duality:
\begin{thm}\label{thm: main2}
       Let $T$ and $\check{T}$ be Langlands dual split tori of rank $r$, and let $X, \check{X}$ be toric dual varieties with torus $T$ and $\check{T}$ respectively, equipped with dual gradings in the sense of Definition \ref{defn: dualGrading}. Then $(T,X)$ and $(\check{T},\check{X})$ have Langlands dual periods with discrepancy $r - \varepsilon$ where $\varepsilon$ is the weight of the dual grading.
\end{thm}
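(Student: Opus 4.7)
The plan is to verify the claimed identity orbit by orbit, reducing each orbit pair to an instance of Theorem \ref{thm: main1}. Fix a face $\tau \leq \sigma$, with associated orbit $O_\tau \subset X$, stabilizer $S_\tau \subset T$, and quotient torus $K_\tau = T/S_\tau$; dually, let $\check{K}_\tau \subset \check{T}$ be the Langlands dual subtorus, with quotient $\check{S}_\tau$ as described after \eqref{eq: orbitDuality}. By Theorem \ref{thm: toricOrbits}, $\overline{O}_\tau$ is the $K_\tau$-toric variety associated to $\sigma_{K_\tau}$, the image of $\sigma$ in $X_*(K_\tau)_\RR$; its toric $\check{K}_\tau$-dual, which I will call $\widetilde{O}_\tau$, is $\spec \ZZ[\sigma_{K_\tau} \cap X^*(\check{K}_\tau)]$.

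First I would treat the automorphic side via the formal unfolding \eqref{eq: orbitDecomp}. The inner integral over $[S_\tau]$ vanishes unless $\chi|\eta|^{1/2}|_{[S_\tau]} = \mathbf{1}$, matching the first nonvanishing condition in Definition \ref{def: regularizeAut}. When nonzero, the remaining integral over $[K_\tau]$ is the unregularized period of $\chi|_{[K_\tau]}$ on $\overline{O}_\tau$ viewed as a $K_\tau$-toric variety with inherited eigenform. The further $\overline{O}_\tau$-cuspidality assumption in Definition \ref{def: regularizeAut} selects precisely the main-orbit contribution of this smaller period, so the main-term calculation from the proof of Theorem \ref{thm: main1} applies to the toric dual pair $(\overline{O}_\tau, \widetilde{O}_\tau)$ and converts $\mathring{P}_{\overline{O}_\tau}$ into an Euler product with local factor $\mathrm{gtr}(\varphi_\chi(\mathrm{Fr}_v) \, | \, \CC[\sigma_{K_\tau} \cap X^*(\check{K}_\tau)])$.

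Next I would match this against $L_{O_{\tau^*} \subset \check{X}}^{\mathrm{reg}}(\varphi_\chi)$ of Definition \ref{def: regularizeSpec}. The stabilizer $\check{K}$ of the dual orbit $O_{\tau^*}$ is exactly $\check{K}_\tau$, as read off from $X_*(\check{K}_\tau) = \tau^\perp \cap X_*(\check{T})$ in the discussion after \eqref{eq: orbitDuality}, and under the identification $X^*(\check{K}_\tau) = X_*(K_\tau)$ the ``reduced'' cone $\sigma_{\check{K}}$ coincides with $\sigma_{K_\tau}$, so the Euler factors agree on the nose. A refinement of Lemma \ref{lem: cuspidalImpliesUniqueFixPoint} matches the vanishing conditions on both sides: $\chi|\eta|^{1/2}|_{[S_\tau]} = \mathbf{1}$ iff the extended $L$-parameter factors through $\check{K}_\tau$ (i.e., fixes $O_{\tau^*}$ pointwise), and $\overline{O}_\tau$-cuspidality of $\chi|_{[K_\tau]}$ iff the parameter fixes no strictly larger orbit closure containing $O_{\tau^*}$. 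The prefactor $\chi(\partial^{-1/2})$ matches $(\eta \circ \varphi)(\partial^{-1/2})$ exactly as in Theorem \ref{thm: main1}, and the $\Delta$-exponent bookkeeping uses only $\dim X = \dim \check{X} = r$ and the common grading weight $\varepsilon$, giving the discrepancy $r - \varepsilon$ uniformly across orbits. The symmetric direction with $X$ and $\check{X}$ exchanged is entirely analogous.

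The main obstacle lies in this last matching step: one must carefully navigate the interplay between the orbit-cone correspondence, the Langlands duality pair $(K_\tau, \check{K}_\tau)$, and the swap between sub- and quotient tori under dualization, to verify that the reduced cones, reduced tori, and vanishing conditions appearing in the two regularization definitions coincide on the nose rather than merely up to isomorphism. Once this identification is pinned down, the theorem is a direct corollary of Theorem \ref{thm: main1} applied fiberwise along the orbit stratification.
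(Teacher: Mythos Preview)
Your proposal is correct and follows essentially the same route as the paper: the paper's one-line proof simply says to combine the orbit-by-orbit formal unfolding \eqref{eq: orbitDecomp} worked out in \S\ref{subsect: otherOrbits} with Theorem~\ref{thm: main1} applied to each orbit pair, and your write-up is a faithful expansion of exactly that argument, including the identification of reduced cones and the refined use of Lemma~\ref{lem: cuspidalImpliesUniqueFixPoint} to match vanishing conditions.
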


\section{Towards weak numerical duality of Deligne--Mumford stacks} \label{appendix: DM}

In this section we suggest provisional definitions of weak numerical duality between toric varieties with finite stabilizers and toric Deligne--Mumford stacks which are global quotients of a toric variety by a finite group. These examples are derived from a weakly numerically dual pair of toric varieties, so in some sense we do not compute any new periods, but we will see arithmetic phenomena that serve as guidelines to treat more sophisticated examples. 

For motivation, consider a $T$-variety $X$ with a dense $T$-orbit $\mathring{X}$, which we place on the automorphic side. Note that the formal unfolding as in \eqref{eq: orbitDecomp} of the \textit{regularized} automorphic period continues to hold even if $T(F)$ has multiple orbits on $\mathring{X}(F)$; this occurs, for instance, when the generic $T$-stabilizer on $X$ is disconnected. Indeed, if $\{a\}$ is a set of orbit representatives of the action of $T(F)$ on $\mathring{X}(F)$, we may formally unfold the regularized automorphic period \eqref{PXregdef} as
\begin{equation}\label{eq: arithmeticOrbits}
    \mathring{P}_X(\chi) = \chi(\partial^{-1/2})\sum_a \, \int_{S_a(\mathbb{A})\backslash T(\mathbb{A})} \Phi(at) \int_{[S_a]}\, \chi|\eta|^{1/2}(st) \, ds \, dt
\end{equation}
where $S_a \in T$ is the stabilizer of $a$. We treat this equation formally as the data of an integral attached to each orbit of $T(F)$ on $\mathring{X}(F)$, as the sum over $a$ may well be an infinite one and may not converge. However, if one restricts to only \textit{unramified orbits} (see the discussion surrounding Equation \eqref{eqn: canonicalBijection}), then the sum is much smaller and our basic proposal is that
\begin{quote}
    The unramified orbit contributions to $\mathring{P}_X$ correspond to unramified twists of the $L$-function.
\end{quote}
This is to be made precise by Definition \ref{defn: weakDualityStack} and Theorem \ref{thm: DMTheorem}.

\subsection{Periods of Deligne--Mumford stacks}

Let $\mu \subset T$ be a finite subgroup scheme. Then we can consider the quotient torus
$$1 \longrightarrow \mu \longrightarrow T \overset{\pi}{\longrightarrow} T' := T/\mu \longrightarrow 1$$
The dual of this sequence reads
$$1 \longrightarrow \check{\mu} \longrightarrow \check{T}' \overset{\check{\pi}}{\longrightarrow} \check{T} \longrightarrow  1$$
where $\check{\mu} := \mathrm{Hom}(\mu, \Gm)$. 

Now suppose we have $(T', Y)$ is a graded $T'$-toric variety, which we consider as a $T$-space via $\pi$; when we consider $Y$ as a $T$-space we will write it as $X$ to avoid confusion. Note that the stabilizer in $T$ of a point in $\mathring{X}$ is the disconnected group $\mu$. Nonetheless, the $T'$-eigenform $\eta'$ and grading (given by a cocharacter $\check{\rho}': \Ggr \to T'$) induces a $T$-eigenform $\eta$ and one can choose a grading on $X$ compatible with the choice of $\check{\rho}'$ (the latter given by an arbitrary lift of $\check{\rho}'$ to a $\Q$-cocharacter $\check{\rho} \in X_*(T)_\Q$). Note that the choice of lift of $\check{\rho}'$ will not affect the formula defining the automorphic $X$-period and the spectral $X$-period since the $\Ggr$-action factors through $T'$, so in the end it depends only on $\check{\rho}'$.

We will call the collection of data
\begin{equation}
    (T,X, \eta, \check{\rho})
\end{equation}
with the choice of lift $\check{\rho}  \in X_*(T)_\QQ$ an \textit{induced grading} on the $T$-toric variety\footnote{Strictly speaking, since $T$ acts with finite stabilizers on $X$, it is not toric in the standard sense. But we abuse terminology here slightly to refer to $X$ as $T$-toric since it contains a dense $T$-orbit.} constructed out of $(T', Y)$.

We propose that the (weak numerical) dual of $(T,X)$ ought to be the Deligne--Mumford stack
\begin{equation} \label{eqn: toricDMStack}
    \check{X} := [\check{Y}/\check{\mu}] =  \check{Y} \times^{\check{T}'} \check{T}
\end{equation}
where $(\check{T}',\check{Y})$ is the graded toric dual of $(T', Y)$. Note that $\check{T}$ acts on $\check{X}$ via the formula
$$y \cdot t := y \tilde{t}$$
where $\tilde{t}$ is an arbitrary lift of $t \in \check{T}$ to $\check{T}'$. Since $\check{Y}$ has a $\check{T}'$-grading given by some cocharacter $\rho': \Ggr \to \check{T}'$, we also have an induced cocharacter $\rho = \check{\pi} \circ \rho': \Ggr \to \check{T}$. On the other hand, the $\check{T}'$-eigenform $\check{\eta}'$ on $\check{Y}$ does not canonically give rise to a $\check{T}$-eigenform. One has to choose a $\Q$-character $\check{\eta}: \check{T} \to \Gm$ in $X^*(\check{T})_\Q$ such that 
$$\check{\eta}' = \check{\eta} \circ \check{\pi}$$
\begin{defn}
    Let $(T', Y, \eta, \check{\rho}')$ be a toric $T'$-variety, whose graded toric dual is $(\check{T}', \check{Y}, \check{\rho}', \eta)$. From this we construct the graded $T$-toric variety $(T,X, \eta, \check{\rho})$ with a choice of $\check{\rho} \in X_*(T)_\QQ$ lifting the $T'$-grading on $Y$. Then $\eta$ gives rise to a $\check{T}$-valued cocharacter with which we can use as a grading on $\check{X}$, and $\check{\rho}$ gives rise to a rational character of $T$ compatible with the eigencharacter of $(\check{T}', \check{Y})$. We say that the data
    $$(T,X, \eta, \check{\rho}) \text{ and } (\check{T}, \check{X}, \check{\rho}, \eta)$$
    with the choice of $\check{\rho} \in X_*(T)_\QQ = X^*(\check{T})_\QQ$, is an \textit{induced dual grading} on $X$ and $\check{X}$.
\end{defn}

We are now ready to develop the notions of automorphic and spectral periods for $X$ and $\check{X}$. Note that the Definitions \eqref{PXdef} for the automorphic period and \eqref{LXdef} for the spectral period make sense for $(T,X)$ even though it has disconnected stabilizers. Thus we focus on defining analogous objects for the stacky $(\check{T}, \check{X})$.

\begin{defn}
    Let $\varphi: \Gamma_F \to \check{T}$ be an $L$-parameter. A fixed point of the $\varphi$-action of $\Gamma_F$ on $\check{X} = \check{Y} \times^{\check{T}'} \check{T}$ is a fixed point in the groupoid sense, i.e., (the groupoid of) pairs $(y, \widetilde{\varphi}) \in \check{Y}(\kk) \times \mathrm{Hom}(\Gamma_F, \check{T}')$ such that $\widetilde{\varphi}$ is a lift of $\varphi$ to $\check{T}'$ and $y \in \check{Y}$ is a fixed point of the $\widetilde{\varphi}$-action. 

    Two fixed points $(y, \widetilde{\varphi})$ and $(y', \widetilde{\varphi}')$ are isomorphic if there is an isomorphism of $\Gamma_F$-actions $\alpha: \widetilde{\varphi} \overset{\sim}{\to} \widetilde{\varphi}'$ bringing $y$ to $y'$. We say that a fixed point $(y, \widetilde{\varphi})$ is \textit{unramified} if $\widetilde{\varphi}$ is an unramified lift of $\varphi$.
\end{defn}
\begin{rem}\label{rem: fixPointOnStack}
    In the case when it is possible to find a lift $\widetilde{\varphi}: \Gamma_F \to \check{T}'$ which is $\check{Y}$-generic, the fixed points are in bijection with lifts of $\varphi$ to $\check{T}'$, which is noncanonically parametrized by $\mathrm{Hom}(\Gamma_F, \check{\mu})$.
\end{rem}

Let $\varphi: \Gamma_F \to \check{T}(\kk)$ be an unramified $L$-parameter. Following \eqref{eq: localLfunction}, for $x_0 = (y,\widetilde{\varphi})$ a fixed point of $\Gamma_F$ on $\check{X}$ we define the \textit{local $L$-function attached to $x_0$ at a place $v$} as
\begin{equation}\label{eq: localLFunctionStack}
    L_v(\check{X}, x_0, \varphi,s) := \mathrm{gtr}\big(\widetilde{\varphi}(\mathrm{Fr}_v) \times q_v^{-s} \, | \, \widehat{\mathcal{O}}_{\check{Y}, y} \big) \in \kk[[q_v^{-s}]]
\end{equation}
and we define the \textit{normalized spectral period} by taking an Euler product and summing over fixed points with the same formula as \eqref{LXdef}, but we only consider the \textit{unramified} fixed points, from which we define
\begin{equation} \label{eqn: unramSpecPeriod}
    L_{\check{X}}^{\mathrm{ur}}(\varphi) := (\check{\rho} \circ \varphi)(\partial^{-1/2})\Delta^{\frac{\varepsilon- \mathrm{dim}(\check{X})}{4}}\sum_{x \text{ unramified}} \, \prod_v \, L_v(\check{X}, x, \varphi, \frac{1}{2}) 
\end{equation}

\begin{rem}
We justify the definition of $L_{\check{X}}^{\mathrm{ur}}$ briefly from the geometric perspective of BZSV's $L$-sheaves (in the de Rham context for expositional convenience). Let's take $\Sigma$ to be a complex algebraic curve only for the discussion of this remark, and consider the de Rham stack $\Sigma_{\mathrm{dR}}$ of $\Sigma$. One forms the diagram of mapping stacks
    \[\begin{tikzcd}
	{\mathrm{Loc}_{\check{T}'}^{\check{Y}}(\Sigma) = \mathrm{Map}(\Sigma_{\mathrm{dR}}, [\check{Y}/\check{T}'])} & {\mathrm{Loc}_{\check{T}'}(\Sigma)} \\
	{\mathrm{Loc}_{\check{T}}^{\check{X}}(\Sigma) = \mathrm{Map}(\Sigma_{\mathrm{dR}}, [\check{X}/\check{T}])} & {\mathrm{Loc}_{\check{T}}(\Sigma)}
	\arrow["{\pi_{\check{Y}}^{\mathrm{spec}}}", from=1-1, to=1-2]
	\arrow["\cong"', from=1-1, to=2-1]
	\arrow["{\check{\pi}}", from=1-2, to=2-2]
	\arrow["{\pi_{\check{X}}^{\mathrm{spec}}}"', from=2-1, to=2-2]
\end{tikzcd}\]
The sheaf $\pi^{\mathrm{spec}}_{\check{X},*} \omega_{\mathrm{Loc}_{\check{T}}^{\check{X}}}$ ought to be the $L$-sheaf attached to $\check{X}$, geometrizing the spectral period, where the symbol $\omega_Z$ denotes the dualizing complex on $Z$. Our sought after definition of $L_{\check{X}}^{\mathrm{ur}}$ then comes from pairing with the skyscraper sheaf at a $\check{T}$-local system: for $\delta_\varphi$ the skyskraper sheaf at a local system $\varphi$, the spectral $\check{X}$-period of $\varphi$ is geometrized by the Hom-space
$$\mathrm{Hom}\left(\delta_\varphi, \pi^{\mathrm{spec}}_{\check{X},*} \omega_{\mathrm{Loc}_{\check{T}}^{\check{X}}}\right) = \mathrm{Hom}\left(\delta_\varphi,(\check{\pi}_* \circ \pi^{\mathrm{spec}}_{\check{Y},*})\omega_{\mathrm{Loc}_{\check{T}'}^{\check{Y}}}\right) = \mathrm{Hom}\left(\check{\pi}^*\delta_\varphi,\pi^{\mathrm{spec}}_{\check{Y},*}\omega_{\mathrm{Loc}_{\check{T}'}^{\check{Y}}}\right)$$
Looking at the right hand side of this equality, we see that we ought to look for lifts $\widetilde{\varphi}$ of $\varphi$ to $\check{T}$-local systems, and we sum over the $\check{Y}$-spectral period of such lifts. This geometrizes the definition given by Equation \eqref{eqn: unramSpecPeriod}.
\end{rem}

To make a complete definition of weak numerical duality, we also need to give an automorphic period attached to the types of Deligne--Mumford stack we consider. 

\begin{defn} \label{defn: automorphicPeriodDM}
    Let $\check{X} = \check{Y} \times^{\check{T}'} \check{T}$ be the $\check{T}$-toric Deligne--Mumford stack as above, equipped with a rational $\check{T}$-character $\check{\rho}$ and a cocharacter $\eta: \Ggr \to \check{T}$. We define the \textit{regularized theta series} of $\check{X}$ as the function on $[\check{T}']$ given by
    $$\mathring{\theta}_{\check{X}}(t) := \sum_{y \in \mathring{\check{Y}}(F)} \, (t, \partial^{1/2}) \star \Phi(y)$$
    where $\Phi$ is our standard choice of basic Schwartz function on $Y(\mathbb{A})$.
    
    For a character $\chi$ of $[\check{T}]$, we define the \textit{regularized automorphic $\check{X}$-period} of $\chi$ as
    $$\mathring{P}_{\check{X}}(\chi) = \int_{[\check{T}']} \, \mathring{\theta}_{\check{X}}(t) \chi(\check{\pi}(t)) \, dt$$
\end{defn}

\begin{rem}
    We justify also the definition of $\mathring{\theta}_{\check{X}}$ and $\mathring{P}_{\check{X}}$ briefly from the geometric perspective of BZSV's period sheaves. One considers the diagram of mapping stacks
   \[\begin{tikzcd}
	{\mathrm{Bun}_{\check{T}'}^{\check{Y}}(\Sigma) = \mathrm{Map}(\Sigma, [\check{Y}/\check{T}'])} & {\mathrm{Bun}_{\check{T}'}(\Sigma)} \\
	{\mathrm{Bun}_{\check{T}}^{\check{X}}(\Sigma) = \mathrm{Map}(\Sigma, [\check{X}/\check{T}])} & {\mathrm{Bun}_{\check{T}}(\Sigma)}
	\arrow["{\pi_{\check{Y}}^{\mathrm{aut}}}", from=1-1, to=1-2]
	\arrow["\cong"', from=1-1, to=2-1]
	\arrow["{\check{\pi}}", from=1-2, to=2-2]
	\arrow["{\pi_{\check{X}}^{\mathrm{aut}}}"', from=2-1, to=2-2]
\end{tikzcd}\]
    The sheaf $\pi_{\check{X}, !}^{\mathrm{aut}}\underline{k}$ ought to be the period sheaf attached to $\check{X}$, geometrizing $\mathring{\theta}_{\check{X}}$\footnote{We have ignored two details in this remark: 1) there ought to be a $K^{1/2}$-twist since we are considering the normalized period, and 2) the regularized theta series is rather geometrized by the constant sheaf on a certain dense substack of $\mathrm{Bun}_{\check{T}}^{\check{X}}(\Sigma)$. These details are not critically important for the purposes of this remark.}. The automorphic $\check{X}$-period evaluated on a character $\chi$ of $\mathrm{Bun}_{\check{T}}(\Sigma)$ is geometrized by the Hom-space $\mathrm{Hom}(\chi, \pi_{\check{X}, !}^{\mathrm{aut}}\underline{k})$, which we can rewrite as
    $$\mathrm{Hom}(\chi, \pi_{\check{X}, !}^{\mathrm{aut}}\underline{k}) = \mathrm{Hom}(\chi, \check{\pi}_*\pi_{\check{Y},!}^{\mathrm{aut}}\underline{k}) = \mathrm{Hom}(\check{\pi}^*\chi, \pi_{\check{Y},!}^{\mathrm{aut}}\underline{k})$$
    the latter of which geometrizes the integral written in Definition \ref{defn: automorphicPeriodDM}. 
\end{rem}

Finally, we discuss the most interesting arithmetic aspect of extending the notion of weak duality to this setting: \textit{the arithmetic orbit structure of $X$ should match the finite stabilizers of $\check{X}$.} To make this precise, note that we have the Kummer exact sequence
$$0 \longrightarrow \mu(F) \longrightarrow  T(F) \longrightarrow \mathring{X}(F) \longrightarrow H^1(\mathrm{Gal}_F, \mu) \longrightarrow H^1(\mathrm{Gal}_F, T) = 0$$
where $\mathrm{Gal}_F$ denotes the absolute Galois group of $F$. Assume from now on that
\begin{equation} \label{assump: rootsOfUnity}
    F \text{ has all the roots of unity of order dividing the exponents of } \mu,
\end{equation} 
then we will have an identification 
\begin{equation} \label{eqn: canonicalBijection}
    T(F) \text{-orbits on } \mathring{X}(F) \longleftrightarrow H^1(\mathrm{Gal}_F, \mu) = \mathrm{Hom}(\mathrm{Gal}_F, \mu)
\end{equation}
We say that a $T(F)$-orbit on $\mathring{X}(F)$ is \textit{unramified} if its corresponding Galois cohomology class in $H^1(\mathrm{Gal}_F, \mu)$ is unramified. 

On the other hand, for a $\check{T}$-valued $L$-parameter $\varphi: \Gamma_F \to \check{T}$ which posseses a $\check{Y}$-generic lift $\widetilde{\varphi}: \Gamma_F \to \check{T}'$, we see by Remark \ref{rem: fixPointOnStack} that the unramified fixed points of $\varphi$ on $\check{X}$ are indexed by unramified lifts of $\varphi$. This forms a torsor under $\mathrm{Hom}(\Gamma_F, \check{\mu})$, which is noncanonically isomorphic to $\mathrm{Hom}(\Gamma_F, \mu)$ since $\mu \cong \check{\mu}$ over $F$ by our assumption \eqref{assump: rootsOfUnity}. Thus, there is a noncanonical bijection
\begin{equation} \label{eqn: noncanonicalBij}
    \text{unramified }T(F)\text{-orbits on } \mathring{X}(F) \longleftrightarrow \mathrm{Hom}(\Gamma_F, \check{\mu})
\end{equation}
where, we recall per our conventions that $\Gamma_F$ is the unramified Weil group of $F$.

\begin{defn}
    We define the \textit{(regularized) unramified $X$-theta series} as
    $$\mathring{\theta}_X^{\mathrm{ur}}(t) = \sum_{x \in \mathring{X}(F) \text{ unramified}} \, (t, \partial^{1/2}) \star \Phi(x)$$
    and we define the \textit{(regularized) unramified automorphic $X$-period} of a character $\chi$ of $[T]$ as
    \begin{equation}
        \mathring{P}_X^{\mathrm{ur}}(\chi) = \int_{[T]} \, \chi(t) \mathring{\theta}_X^{\mathrm{ur}}(t) \, dt
    \end{equation}
\end{defn}

\begin{rem} 
    In the definition of $\mathring{\theta}_X^{\mathrm{ur}}$ and thus in $\mathring{P}_X^{\mathrm{ur}}$ we were careful in only taking those rational points $x \in \mathring{X}(F)$ that are unramified. However, if $x \in \mathring{X}(F)$ is an arbitrary (not necessarily unramified) orbit, then it contributes via equation \eqref{eq: arithmeticOrbits} another Eulerian integral. We see that formally, the noncanonical bijection \eqref{eqn: noncanonicalBij} can be extended to one between all $T(F)$-orbits on $\mathring{X}(F)$ and $\mathrm{Hom}(\mathrm{Gal}_F, \check{\mu})$, hence the collection of Eulerian integrals contributed by all orbits ought to correspond to the spectral period contributed by all lifts of the $L$-parameter, with an appropriate choice of ramified Schwartz function at the ramified places.
\end{rem}

\begin{defn}\label{defn: weakDualityStack}
    Let $T, \check{T}$ be split Langlands dual tori. Let $\mu \subset T$ be a finite subgroup scheme, from which we form the quotient torus $T' = T/\mu$ whose dual torus we denote by $\check{T}'$. Let $(T',Y)$ and $(\check{T}', \check{Y})$ be a graded toric dual pair, from which we construct the graded toric dual $(T,X)$ and $(\check{T}, \check{X})$, as above. Then we say that $(T, X)$ and $(\check{T}, \check{X})$ are \textit{weakly numerically dual with discrepancy $(a, \check{a}) \in \ZZ^2$} if for 
    \begin{itemize}
        \item any finite field $\F$ of size $q$ prime to the order of $\mu$ and containing all roots of unity with order dividing the exponents of $\mu$, and
        \item any curve $\Sigma$ over $\F$ with function field $F$,
    \end{itemize}  
   we have 
   \begin{itemize}
       \item for any unramified character $\chi$ of $[T]$, the identity
       $$\mathring{P}_X^{\mathrm{ur}}(\chi) = \frac{\Delta^{a/4}}{\big[[T']:\pi([T])\big]} \, L^{\mathrm{ur}}_{\check{X}}(\varphi_\chi)$$
       where $\varphi_\chi: \Gamma_F \to \check{T}$ is the $L$-parameter of $\chi$, and
       \item for any unramified character $\chi$ of $[\check{T}]$, the identity
       $$\mathring{P}_{\check{X}}(\chi) = \Delta^{\check{a}/4} \, L_X(\varphi_\chi)$$
       where $\varphi_\chi$ is the $L$-parameter of $\chi$.
   \end{itemize}
\end{defn}

Now we come to the main theorem of this section.

\begin{thm} \label{thm: DMTheorem}
    Let $(T,X)$ and $(\check{T}, \check{X})$ be a graded toric dual pair induced from a finite subgroup $\mu \subset T$ and a toric dual pair $(T', Y)$ and $(\check{T}', \check{Y})$ as above. Then $(T,X)$ and $(\check{T}, \check{X})$ are weakly numerically dual, with the same discrepancy as the weakly numerically dual pair $(T', Y)$ and $(\check{T}', \check{Y})$. 
\end{thm}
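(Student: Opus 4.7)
The strategy is to establish the two identities of Definition \ref{defn: weakDualityStack} separately, each time reducing to Theorem \ref{thm: main1} applied to the non-stacky toric dual pair $(T', Y)$ and $(\check{T}', \check{Y})$.

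For the identity $\mathring{P}_{\check{X}}(\chi) = L_X(\varphi_\chi)$, the plan is almost tautological: by Definition \ref{defn: automorphicPeriodDM}, the theta function $\mathring{\theta}_{\check{X}}$ is literally $\mathring{\theta}_{\check{Y}}$ viewed as a function on $[\check{T}']$, so
\[
\mathring{P}_{\check{X}}(\chi) \;=\; \mathring{P}_{\check{Y}}(\chi \circ \check{\pi}).
\]
Theorem \ref{thm: main1} applied to $(T', Y), (\check{T}', \check{Y})$ identifies this with $L_Y$ of the $L$-parameter of $\chi\circ\check{\pi}$, namely $\pi\circ\varphi_\chi:\Gamma_F\to T'$. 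Since $X=Y$ as schemes, with the $T$-action factoring through $\pi$, one has $L_X(\varphi_\chi) = L_Y(\pi\circ\varphi_\chi)$ by inspection of \eqref{eq: localLfunction}, completing this direction. The only bookkeeping is that the induced grading is designed so that the $\partial$-normalizations and $\Delta$-powers match under the pullback by $\pi$ and $\check{\pi}$.

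For the identity $\mathring{P}_X^{\mathrm{ur}}(\chi) = L^{\mathrm{ur}}_{\check{X}}(\varphi_\chi)$, the plan is to decompose both sides as sums indexed by the same finite set and match term by term. On the automorphic side, unfolding the unramified period as in \eqref{eq: arithmeticOrbits} yields a sum over unramified $T(F)$-orbits on $\mathring{X}(F)$, which by the Kummer sequence restricted to unramified classes is parametrized by $\mathrm{Hom}(\Gamma_F^{\mathrm{ur}}, \mu)$. For a representative $a$ of such an orbit with stabilizer $\mu$, the integral attached to $[a]$ is Eulerian: pushing $a$ into $T'(\mathbb{A})$ and making the change of variable $t\mapsto a^{-1}t$ identifies it with the main-term integral for $(T', Y)$ against a character $\chi_a$ of $[T']$ whose restriction to $T(\mathbb{A})/\mu(\mathbb{A})$ equals $\chi$, twisted by the class $[a]\in\mathrm{Hom}(\Gamma_F^{\mathrm{ur}},\mu)$ in the extension-class sense. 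Applying Theorem \ref{thm: main1} to this main term gives $L_{\check{Y}}(\varphi_{\chi_a})$, where $\varphi_{\chi_a}:\Gamma_F\to\check{T}'$ is an unramified lift of $\varphi_\chi$. On the spectral side, \eqref{eqn: unramSpecPeriod} is by construction a sum over unramified fixed points of $\varphi_\chi$ on $\check{X}$, i.e., over unramified lifts $\widetilde{\varphi}$ of $\varphi_\chi$ to $\check{T}'$; the $L$-factor at each such lift is exactly $\prod_v L_v(\check{Y}, y, \widetilde{\varphi}, \tfrac{1}{2})$ by \eqref{eq: localLFunctionStack}. Under the noncanonical bijection \eqref{eqn: noncanonicalBij}, guaranteed by assumption \eqref{assump: rootsOfUnity}, the orbit $[a]$ matches the lift $\widetilde{\varphi}_a$, and the individual terms agree.

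The main obstacle will be verifying that the bijection \eqref{eqn: noncanonicalBij} can be chosen so that the \emph{term-by-term} matching holds, not merely the totals; in particular, one must check that the character $\chi_a$ of $[T']$ produced by choosing a base point $a$ in an unramified orbit has $L$-parameter equal to the lift $\widetilde{\varphi}_a$ assigned by the bijection, and that the normalizing factors $\chi(\partial^{-1/2})$, $(\check{\rho}\circ\varphi)(\partial^{-1/2})$, and the $\Delta$-powers (now involving the possibly fractional $\check{\rho},\check{\eta}\in X_*(T)_\QQ$) agree on the nose. Once the index sets are canonically identified via a single choice of generator of $\mu(\overline{F})$ (which is available under \eqref{assump: rootsOfUnity}), the matching of normalizations follows from the identity $\check{\eta}\circ\check{\pi}=\check{\eta}'$ and the parallel identity on the $\Ggr$-side, at which point the proof reduces cleanly to Theorem \ref{thm: main1} applied lift by lift.
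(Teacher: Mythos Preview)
Your treatment of the first identity $\mathring{P}_{\check{X}}(\chi) = L_X(\varphi_\chi)$ matches the paper's argument.

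For the second identity you take a different route from the paper, and there is a genuine gap. The paper does \emph{not} unfold over $T(F)$-orbits on $\mathring{X}(F)$. Instead it observes that the integrand factors through $\pi(t)$, picks one unramified lift $\widetilde{\chi}$ of $\chi$ to $[T']$, changes variables to $\tau=\pi(t)$, and uses that $\pi:[T]\to[T']$ is \emph{injective} (a rational element with everywhere-local $n$th roots is a global $n$th power) to recast $\mathring{P}_X^{\mathrm{ur}}(\chi)$ as an integral over the finite-index subgroup $\pi([T])\subset[T']$. Fourier expansion along that subgroup then yields directly the sum $\sum_{\widetilde{\chi}}\mathring{P}_Y(\widetilde{\chi})$ over all unramified lifts; no bijection between orbits and lifts is ever needed, and Theorem~\ref{thm: main1} applied to each summand finishes.

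Your plan breaks at the step ``identifies it with the main-term integral for $(T',Y)$ against a character $\chi_a$ of $[T']$.'' After unfolding as in \eqref{eq: arithmeticOrbits}, the contribution of the orbit through $a$ is an integral over $\mu(\mathbb{A})\backslash T(\mathbb{A})\cong \pi(T(\mathbb{A}))$, which is a \emph{strict} subgroup of $T'(\mathbb{A})$ with local cokernel $H^1(F_v,\mu)$; the period $\mathring{P}_Y(\chi_a)$, by contrast, is an integral over all of $T'(\mathbb{A})$. In the basic example $T=T'=\Gm$, $\pi(t)=t^n$, $Y=\mathbb{A}^1$, the local Euler factor of the orbit integral at a place where $a$ is a unit is $(1-\chi_v(\varpi_v)q_v^{-n/2})^{-1}$, whereas that of $\mathring{P}_Y(\widetilde{\chi})$ is $(1-\widetilde{\chi}_v(\varpi_v)q_v^{-1/2})^{-1}$ with $\widetilde{\chi}_v(\varpi_v)^n=\chi_v(\varpi_v)$; no single lift makes these coincide. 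So the obstacle you flag is not one of normalizations or of choosing the bijection \eqref{eqn: noncanonicalBij} well: the individual orbit integrals are simply not $Y$-periods of any character of $[T']$, and term-by-term matching is impossible. The paper's Fourier-theoretic argument sidesteps this entirely by never separating the theta sum from the adelic integral.
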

\begin{proof}
    We first start with $(\check{T}, \check{X})$ on the automorphic side, and we write $(b,\check{b})$ for the discrepancy of the weakly numerically dual pair $(T', Y)$ and $(\check{T}', \check{Y})$. For an unramified character $\chi$ of $[\check{T}]$ with $L$-parameter $\varphi: \Gamma_F \to T(\kk)$, we see that the automorphic $\check{X}$-period of $\chi$ on $\check{T}$ is just the automorphic $\check{Y}$-period of $\chi \circ \check{\pi}$ on $\check{T}'$, so we have
    $$\mathring{P}_{\check{X}}(\chi) = \mathring{P}_{\check{Y}}(\chi \circ \check{\pi}) = \Delta^{\check{a}/4} \, L_Y(\pi \circ \varphi) = \Delta^{\check{a}/4} \, L_X(\varphi)$$
    as one half of the duality, with $\check{b} = \check{a}$. 
    
    Now we place $(T,X)$ on the automorphic side. For an unramified character $\chi$ on $[T]$ with $L$-parameter $\varphi: \Gamma_F \to \check{T}(\kk)$, we calculate the automorphic $X$-period 
    \begin{align*}
        &\mathring{P}_X^{\mathrm{ur}}(\chi) = \int_{[T]} \, \chi(t) \mathring{\theta}_X^{\mathrm{ur}}(t) \, dt
    \end{align*}
in terms of $\widetilde{\chi}$, an arbitrary (unramified) extension of $\chi$ to $[T']$ so that $\chi = \widetilde{\chi} \circ \pi$. Such a lift always exists since $\chi$ is unramified so we can view it as a character on $[T] = T(F) \backslash T(\mathbb{A})/T(\mathcal{O})$, and the homomorphism
$$\pi: T(F) \backslash T(\mathbb{A})/T(\mathcal{O}) \to T'(F) \backslash T'(\mathbb{A})/T'(\mathcal{O})$$
is injective; indeed, this follows from the fact that if a rational element $x \in F^\times$ has an $n$th root everywhere locally, then $x$ is itself a rational $n$th power. 

Now we make the change of variables $\tau = \pi(t)$ and apply Fourier expansion along the subgroup $\pi([T]) \subset [T']$ to obtain
    \begin{align*}
        &\mathring{P}_X^{\mathrm{ur}}(\chi) = \int_{\pi([T])} \, \widetilde{\chi}(\tau) \mathring{\theta}_X^{\mathrm{ur}}(\tau) d\tau\\
        &= \frac{1}{\big[[T']:\pi([T])\big]}\sum_{\varepsilon: \pi([T]) \backslash [T'] \to \CC^\times} \, \int_{[T']} \, \widetilde{\chi}(\tau)\varepsilon(\tau)\mathring{\theta}_Y(\tau) d\tau
    \end{align*} 
    Note that since $\widetilde{\chi}\mathring{\theta}_Y$ is $T'(\widehat{\mathcal{O}})$-invariant, only those summands corresponding to unramified characters $\varepsilon$ of $[T']$ will be nonzero. Furthermore, the set of characters 
    $$\big\{ \, \widetilde{\chi}\varepsilon: \varepsilon: \pi([T]) \backslash [T'] \to \CC^\times \text{ unramified }\big\}$$
    is exactly the set of unramified lifts of $\chi$ to $[T']$. The $L$-parameters of this set of characters is then the set of unramified lifts of $\varphi$ to $\check{T}'$, and we can also rewrite the previous line as
    \begin{align*}
        \mathring{P}_X^{\mathrm{ur}}(\chi) &= \frac{1}{\big[[T']:\pi([T])\big]}\sum_{\widetilde{\chi} \, : \, \chi = \widetilde{\chi} \circ \pi} \, \mathring{P}_Y(\widetilde{\chi})\\
        &= \frac{\Delta^{a/4}}{\big[[T']:\pi([T])\big]}\sum_{\widetilde{\varphi} \, : \, \varphi = \check{\pi} \circ \widetilde{\varphi}} \, L_{\check{Y}}(\widetilde{\varphi}) = \frac{\Delta^{a/4}}{\big[[T']:\pi([T])\big]}\, L_{\check{X}}^{\mathrm{ur}}(\varphi),
    \end{align*}
    so that $b = a$.
\end{proof}

\begin{rem}
    In considering only unramified characters we overlook a pleasing feature of the above calculation, resembling $\varepsilon$-dichotomy phenomena, when $(T,X)$ is placed on the automorphic side and $(\check{T}, \check{X})$ on the spectral side. Suppose $\chi$ is a character of $[T]$ ramified at some finite set of places $S$, with $U_v = \mathrm{Ker}(\chi_v) \subset T_v$. Then it is no longer the case that the adelic quotient of the finite group $\mu$ is trivial: indeed, instead we have the finite set
    $$\mu(F) \backslash \mu(\mathbb{A})/ \mu(\widehat{\mathcal{O}}_{v \not \in S}) = \mu(F) \backslash \prod_{v \in S} \, \mu(F_v) =: E.$$
    In unfolding the integral $P_X(\chi)$ exactly as in \eqref{eq: orbitDecomp}, we let $\{x\}$ be a set of $T(F)$-orbit representatives on $X(F)$, and obtain
    \begin{equation}
        P_X(\chi) = \chi(\partial^{-1/2})\sum_{x \in X(F)/T(F)} \, \int_{\mu(\mathbb{A}) \backslash T(\mathbb{A})} \, \Phi(xt)\chi|\eta|^{1/2}(t) \, \sum_{\varepsilon \in E} \, \chi(\varepsilon) \, dt
    \end{equation}
    so we see that a necessary condition for the nonvanishing of $P_X(\chi)$ is the triviality of $\chi|_E$.

    Dually, we see that the stack $\check{X} = \check{Y} \times^{\check{T}'} \check{T}$ distinguishes $L$-parameters that factor through the finite cover $\check{T}' \to \check{T}$. This condition is equivalent to $\chi$ adimitting a lift to $[T']$, which is in turn equivalent to $\chi|_E = \mathbf{1}$, a condition trivially satisfied by unramified characters. 
\end{rem}

One can imagine a generalization of Theorem \ref{thm: DMTheorem} involving central isogenies of general reductive groups: suppose $G', \check{G}'$ is a pair of Langlands dual reductive groups, and
$$1 \longrightarrow \mu \longrightarrow G \longrightarrow G' \longrightarrow 1$$
is a central isogeny with kernel $\mu \subset G$, presenting $G$ as a covering group of $G'$. Then we have a dual sequence
$$1 \longrightarrow \check{\mu} \longrightarrow \check{G}' \longrightarrow \check{G} \longrightarrow 1$$
which presents $\check{G}'$, the Langlands dual group of $G$, as a cover of $\check{G}$. If $(G', Y)$ and $(\check{G}', \check{Y})$ is a weakly numerically dual pair, then we can consider the pair
$$(G, X = Y) \text{ and } (\check{G}, \check{X} = \check{Y} \times^{\check{G}'} \check{G})$$
where $(G,X)$ has disconnected generic stabilizers, and $\check{X}$ is a Deligne--Mumford stack. We expect the pair $(G,X)$ and $(\check{G}, \check{X})$ to be weakly numerically dual, following the same calculations as done in Theorem \ref{thm: DMTheorem}. A much more interesting situation is when the covering $G \to G'$ is \textit{metaplectic}; we will return to this question via some examples elsewhere.

\appendix
\section{The work of Batyrev--Tschinkel on Manin's Conjecture} \label{appendix: BT}

We thank Philippe Michel for bringing to our attention the fact that the spectral periods we consider had appeared in the literature in another guise, in the work of Batyrev--Tschinkel on Manin's conjecture for toric varieties \cite{Batyrev-Tschinkel}. Combining the two perspectives suggests generalizations of our constructions to \textit{arbitrary} toric varieties, i.e. not only affine ones, and definitions of archimedean local integrals. We hope to return to these questions in another note in the near future, while for now we content ourselves with explaining the relationship between the two calculations. 

For a $T$-toric variety $X$ defined by a fan $\Sigma \subset X_*(T)_\RR$ and a $\CC$-valued, piecewise linear function $\varphi$ on $\Sigma$, one defines a height function
$$H( \, \cdot \,, \varphi): T(F) \longrightarrow \RR$$
$$H(x, \varphi) := \prod_{v < \infty} \, q_v^{-\varphi(\overline{x})} \prod_{v | \infty} e^{\varphi(\overline{x})}$$
where $\overline{x}$ is the image of $x$ inside $T(F_v)/T(\mathcal{O}_v) \cong X_*(T)$. Essentially the choice of $\varphi$ encodes the choice of a metrized line bundle with which one defines a notion of height for the rational points inside the dense $T$-orbit inside $X$. Note also that this is analogous to our choice of eigenform $\eta$ on $X$, whose fan consists of just one cone of maximal dimension. Furthermore, the height function plays the same role as our Schwartz function just as a Siegel section plays the same role as the Schwartz function in the formation of an incomplete Eisenstein series.

In the context of Manin's conjecture, one is interested in the asymptotic behavior of the number of rational points of bounded height: for $B \geq 0$, one considers
$$N(X, \varphi, B) := \big|\big\{x \in T(F) \text{ such that } H(x,\varphi) \leq B \big\}\big|$$
and the conjecture gives an asymptotic for $N(X, \varphi,B)$ as $B \to \infty$ (see pg. 5 of \textit{loc. cit}). Applying a Tauberian theorem, one turns to analyzing the analytic behavior of the \textit{height zeta function}
$$Z(\varphi) := \sum_{x \in T(K)} \, H(x,\varphi)$$
whose argument lies in the space of piecewise linear functions on $\Sigma$ (when it converges). 

Viewing $H( \, \cdot \, , \varphi)$ as a function on the adelic torus $T(\mathbb{A})$ and noting that $Z(\varphi)$ can be rewritten using Poisson summation, one is led to compute the Fourier transform of $H( \, \cdot \, , \varphi)$: for a character $\chi$ of $[T]$, one then considers the integral
$$\widehat{H}(\chi,\varphi) := \int_{[T]} \, \chi(t) H(t,\varphi) \, dt$$
which is more or less our period integral $P_X(\chi)$, and a direct application of Poisson summation gives
$$Z(\varphi) \sim \int_{\chi \in \widehat{[T]}} \, \widehat{H}(\chi, \varphi) \, d\chi$$
where $\sim$ means up to an explicit rational factor, and $\widehat{[T]}$ is the space of unramified unitary characters of $[T]$. Here we only need to consider unramified characters since, $H$ being $T(\mathcal{O}_v)$-invariant for all $v$, we see that $\widehat{H}(\chi,\varphi) = 0$ unless $\chi$ is unramified. A careful analysis of the $\widehat{H}(\chi)$ yielded precise expressions of the leading terms of $Z$ at its poles. 

One expects naturally then that general nonlinear $L$-functions $L_{\check{X}}$ should help us approach Manin's conjecture on $X$; this seems to be a fruitful way to understand recent work on Manin's conjecture on spherical Fano varieties \cite{BBDG}.

\end{document}